\newtheorem{theorem}{Theorem}[section]
\newtheorem{lemma}[theorem]{Lemma}
\newtheorem{proposition}[theorem]{Proposition}
\newtheorem{corollary}[theorem]{Corollary}
\theoremstyle{definition}
\theoremstyle{remark}
\newtheorem{remark}[theorem]{Remark}
\numberwithin{equation}{section}
\def\fnote#1{\footnote}
\def\ignora#1{}
\def\n3#1{\left\vert  \! \left\vert \! \left\vert \, #1 \, \right\vert \!
  \right\vert \! \right\vert }
\begin{document}

\title{ Diameter two properties in James  spaces }

\author{Julio Becerra Guerrero, Gin{\'e}s L{\'o}pez-P{\'e}rez and Abraham Rueda Zoca}
\address{Universidad de Granada, Facultad de Ciencias.
Departamento de An\'{a}lisis Matem\'{a}tico, 18071-Granada
(Spain)} \email{glopezp@ugr.es, juliobg@ugr.es,
arz0001@correo.ugr.es}

\begin{abstract}
We study the diameter two properties in the spaces  $JH$,
$JT_\infty$ and $JH_\infty$. We show that the topological dual
space of the previous Banach spaces fails every diameter two
property. However, we prove that $JH$ and $JH_{\infty}$ satisfy
the strong diameter two property, and so the dual norm of these
spaces is octahedral. Also we find a closed hyperplane $M$ of
$JH_\infty$ whose topological dual space enjoys 
 the $w^*$-strong diameter two property and also $M$ and $M^*$ have an octahedral norm.
\end{abstract}

\maketitle\markboth{J. Becerra, G. L\'{o}pez and A. Rueda}{Diameter two properties in James spaces}

\section{Introduction.}
\bigskip
\par

The well known Radon-Nikodym property (RNP)
in Banach spaces is characterized by the existence of slices with
arbitrarily
 small diameter in every non empty, closed and bounded subset of the space. In last years a new topic has emerged,
 extremely different to the RNP: the diameter two properties.

We say that a Banach space has the slice diameter two property  (slice-D2P), respectively diameter two property (D2P),
strong diameter two property (SD2P) if every slice (respectively non-empty weakly open set, convex combination of slices) in its unit
 ball has diameter two. For a dual Banach space, we say that it has the $w^*$-slice diameter two property
 (respectively $w^*$-diameter two property, $w^*$-strong diameter two property) if every $w^*$ slice
 (respectively non-empty $w^*$ open set, convex combination of $w^*$-slices) in its unit ball has diameter two. It is known that the six above  properties are extremly different as proved in  \cite{avd}.

It is known a wide class of Banach spaces  enjoying some of the previous properties
as infinite-dimensional uniform algebras \cite{nywe}, infinite-dimensional $C^*$-algebras \cite{beloro},
non-reflexive $M$-embedded spaces \cite{gines}, Banach spaces with the Daugavet property \cite{sh}, etc, which shows that  these properties
have strong links with another well known  properties as  
containing  an isomorphic copy of $\ell_1$ \cite{blr} or Daugavet property.  

In spite of the wide study about the size of slices, non-empty
weakly open subsets and convex combination of slices in the unit
ball of several Banach spaces, non-classical Banach spaces
have not been deeply studied yet.

Probably the origin of non-classical Banach spaces was in
\cite{james1}, where the space $J$ (James space) is constructed in
order to provide an example of a non-reflexive Banach space which
fails to contain an isomorphic copy of $c_0$ or $\ell_1$. A year
later \cite{james2}, James went further and modified the
definition of the norm in order to show that $J$ and $J^{**}$ are
isometrically isomorphic, in spite of the non-reflexivity of $J$
(see \cite{fega} or \cite{litza}  for background about
$J$ space). It is known that $J$ and $J^*$ have the RNP as dual
and separable Banach spaces.

After the construction of $J$ space, James constructed in
\cite{james3} the $JT$ space (James tree space), exhibiting an
example of separable Banach space whose topological dual space is
not separable and that does not contain any isomorphic copy of
$\ell_1$, giving a negative answer to a conjecture of Stephan
Banach (again we refer to  \cite{fega} or \cite{litza}
for background in $JT$ space). It is known that $JT$ satisfies the
RNP and that $B$, the predual of $JT$, does not have the RNP
\cite{fega}. The fact that $B$ fails the RNP is far to prove that
$B$ has the slice-D2P. Indeed, in \cite[Theorem 5.1]{scsewe} it is
proved the existence of a constant $0<\beta<2$ such that every
closed and convex subset of the unit ball of $B$ has a slice of
diameter less than or equal to $\beta$, being the first
non-classical Banach space whose size of the slices of its unit
ball is studied. In fact, it is conjectured in \cite[Remark 5.2]{scsewe} that the above constant $\beta$ 
could be, at most, $\sqrt{2}$.

Motivated by the analysis of $B$, the aim of this note is to study
the slices of the unit ball of another exotic spaces. Indeed, in
section 2 we focus our attention in $JT_\infty$ space, showing
 that $JT_\infty^*$ fails the $w^*$-slice diameter two
property, and so every diameter two property. Note that
$JT_{\infty}$ is a separable dual space and then satisfies RNP. Then the space $B_{\infty}$,
the predual space of $JT_{\infty}$, fails every diameter two property. In fact, we prove that the inf of the diameters of 
slices in the unit ball of $B_{\infty}$ is, at most, $\sqrt{2}$. The same fact also holds for the predual space $B$ of $JT$, which proves that the suspect in \cite[Remark 5.2]{scsewe} holds for the unit ball.  In
section 3 we prove that the unit ball of $JH$ has a Fr\'echet
differentiability point and, as a consequence, the unit ball of
$JH^*$ contains $w^*$-slices of arbitrarily small diameter,
failing each property of diameter two. However it is proved in
this section that $JH$ in fact has the strong diameter two
property, and so $JH$ satisfies every diameter two property. As a
consequence the norm in the dual space $JH^*$ is octahedral.  In
section 4 we introduce the $JH_\infty$ space, a Banach space not isomorphic to $JH$, showing that unit
ball of $JH_\infty^*$ has $w^*$ slices of diameter strictly less
than 2. Also one can prove that  $JH_{\infty}$ has the strong diameter two property and so an octahedral dual norm.
Finally, in section 5, we find a closed hyperplane  $M$ of
$JH_\infty$ such that $M^*$ satisfies   the $w^*$-strong
diameter two property and also $M$ and $M^*$ have an octahedral norm.

We pass now to introduce some notation. We consider real Banach
spaces. $B_X$ and $S_X$  stand for the closed unit ball and the
unit sphere of the Banach space $X$. We denote by $X^*$ the
topological dual space of $X$. For a slice of a bounded subset
$A\subseteq X$ we mean the set

$$S(A,x^*,\alpha):=\{x\in A\ /\ x^*(x)>\sup x(A)-\alpha\}$$
for $x^*\in X^*$ and $0<\alpha$. By a $w^*$-slice of a
bounded subset $B\subseteq X^*$ we mean the set

$$S(B,x,\alpha):=\{x^*\in B\ /\ x^*(x)>\sup x(B)-\alpha\}$$
for $x\in X$ and $0<\alpha$.

Recall that the norm of a Banach space $X$ is octahedral (see \cite{dgz})  if for every $\varepsilon>0$ and for every finite-dimensional subspace $Y$ of $X$ there is $x\in S_X$ such that
$$\Vert \lambda x+y\Vert>(1-\varepsilon)(\vert \lambda\vert +\Vert y\Vert)$$
for every $y\in Y$ and for every scalar $\lambda$. We remark that the norm of a Banach space $X$ is octahedral if, and only if, $X^*$ satisfies the $w^*$-strong diameter two property and, dually,
the norm of $X^*$ is octahedral if, and only if, $X$ satisfies the strong diameter two property (see \cite{blr}).

Also we recall that a Banach space $X$ has the Daugavet property if the equation
\begin{equation}\label{daugavet}
 \Vert T+I\Vert =1+\Vert T\Vert
\end{equation} for every  rank one, linear and bounded operator on $X$, where $I$ denotes the identity operator. $X$ is said to have the almost Daugavet property if there is some norming subspace $Y$ of $X^*$ such that the equation \ref{daugavet} holds for every rank one operator $T$ given by $T=x\otimes y^*$ for $x\in X$ and $y^*\in Y$.  It is known \cite{Vladimir} that, for a separable Banach space, having octahedral norm and satisfying the almost Daugavet property are equivalent. Also  this is equivalent to say that $X^*$ has the $w^*$-strong diameter two property, as can be deduced from the comments in the above paragraph.   These facts will be used freely bellow.

The following known result, see Lemma 2.1 and Proposition 3.1 in \cite{bero},   will be useful in order to estimate the inf of the diameters of $w^*$- slices in dual spaces.

\begin{theorem}\label{teofundajulang}\
Let $X$ be a Banach space and assume that  $A\subseteq X^*$
satisfies  that $B_{X^*}=\overline{co}^{w^*}(A)$. If   $x\in S_X$,
then

$$\inf\limits_{\alpha>0} diam(S(A,x,\alpha))=\inf\limits_{\alpha>0} diam(S(B_{X^*},x,\alpha)).$$

\end{theorem}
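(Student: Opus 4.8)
The plan is to prove the two inequalities separately, after a preliminary normalisation. Since the evaluation functional $x^{*}\mapsto x^{*}(x)$ is $w^{*}$-continuous and affine, its supremum over $A$ equals its supremum over $\overline{co}^{w^{*}}(A)=B_{X^{*}}$, and the latter is $\Vert x\Vert=1$ because $x\in S_{X}$. Hence $\sup x(A)=\sup x(B_{X^{*}})=1$ and both families of slices are cut at the same height, $S(A,x,\alpha)=\{x^{*}\in A:\ x^{*}(x)>1-\alpha\}$ and $S(B_{X^{*}},x,\alpha)=\{x^{*}\in B_{X^{*}}:\ x^{*}(x)>1-\alpha\}$. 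The inequality $\inf_{\alpha>0}\operatorname{diam}(S(A,x,\alpha))\le\inf_{\alpha>0}\operatorname{diam}(S(B_{X^{*}},x,\alpha))$ is then immediate: from $A\subseteq B_{X^{*}}$ we get $S(A,x,\alpha)\subseteq S(B_{X^{*}},x,\alpha)$ for every $\alpha$, so the diameters, and hence their infima, compare monotonically.

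For the reverse inequality the key estimate I would establish is that, for every $\alpha_{0}>0$ and every $\beta>0$,
\begin{equation*}
\operatorname{diam}(S(B_{X^{*}},x,\beta))\le \operatorname{diam}(S(A,x,\alpha_{0}))+\tfrac{4\beta}{\alpha_{0}}.
\end{equation*}
To prove it, fix $u^{*},v^{*}\in S(B_{X^{*}},x,\beta)$ and a test vector $z\in B_{X}$. Using $u^{*}\in\overline{co}^{w^{*}}(A)$ and approximating on the two coordinates $x$ and $z$, I choose a finite convex combination $P=\sum_{i}\lambda_{i}a_{i}$ with $a_{i}\in A$ such that $\abs{P(x)-u^{*}(x)}<\delta$ and $\abs{P(z)-u^{*}(z)}<\delta$. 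Since $u^{*}(x)>1-\beta$ while each $a_{i}(x)\le 1$, a weighted count shows that the total mass $\mu$ carried by the indices with $a_{i}\notin S(A,x,\alpha_{0})$ (that is, $a_{i}(x)\le 1-\alpha_{0}$) satisfies $\mu\alpha_{0}<\beta+\delta$. Discarding this small tail and renormalising, $P$ lies within $2\mu$ in norm of a genuine convex combination $w^{*}$ of points of $S(A,x,\alpha_{0})$, whence $\abs{u^{*}(z)-w^{*}(z)}<\delta+2\mu$; the same procedure applied to $v^{*}$ produces $w'^{*}\in co(S(A,x,\alpha_{0}))$ with $\abs{v^{*}(z)-w'^{*}(z)}<\delta+2\mu'$, where again $\mu'\alpha_{0}<\beta+\delta$.

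Next I would recombine coordinatewise: writing $(u^{*}-v^{*})(z)=(u^{*}(z)-w^{*}(z))+(w^{*}(z)-w'^{*}(z))+(w'^{*}(z)-v^{*}(z))$ and using that $\Vert w^{*}-w'^{*}\Vert\le\operatorname{diam}(co(S(A,x,\alpha_{0})))=\operatorname{diam}(S(A,x,\alpha_{0}))$ (convex hulls do not change diameters), I obtain $(u^{*}-v^{*})(z)<\operatorname{diam}(S(A,x,\alpha_{0}))+2\delta+4(\beta+\delta)/\alpha_{0}$. Crucially this bound does not depend on $z$; letting $\delta\to0$ and then taking the supremum over $z\in B_{X}$ gives $\Vert u^{*}-v^{*}\Vert\le\operatorname{diam}(S(A,x,\alpha_{0}))+4\beta/\alpha_{0}$, and the supremum over $u^{*},v^{*}$ yields the displayed estimate. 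Finally, for fixed $\alpha_{0}$ letting $\beta\to0$ gives $\inf_{\beta>0}\operatorname{diam}(S(B_{X^{*}},x,\beta))\le\operatorname{diam}(S(A,x,\alpha_{0}))$, and taking the infimum over $\alpha_{0}$ closes the reverse inequality.

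I expect the main obstacle to be precisely the uniformity exploited in the last step: the convex combinations $w^{*},w'^{*}$ approximating $u^{*},v^{*}$ are chosen relative to the fixed direction $z$ and change with it, so there is no single clean inclusion such as $S(B_{X^{*}},x,\beta)\subseteq \overline{co}^{w^{*}}(S(A,x,\alpha_{0}))$ to lean on. The argument has to be run separately for each $z$, and the whole point is that the resulting error is bounded by the constant $4\beta/\alpha_{0}$, independent of $z$, so that it survives the passage to the supremum. The elementary mass estimate $\mu\alpha_{0}<\beta+\delta$ is what forces this error to vanish as $\beta\to0$ and thus drives the proof.
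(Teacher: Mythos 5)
Your proof is correct and follows essentially the same route as the paper, which does not argue the result itself but cites Lemma 2.1 and Proposition 3.1 of \cite{bero}: there the same mass estimate ($\mu\alpha_{0}<\beta+\delta$ for the weight falling outside $S(A,x,\alpha_{0})$) is packaged as the inclusion $S(B_{X^{*}},x,\beta)\subseteq\overline{co}^{w^{*}}(S(A,x,\alpha_{0}))+\tfrac{2\beta}{\alpha_{0}}B_{X^{*}}$, whereas you run it coordinatewise in $z$, which is a legitimate (slightly more elementary) variant costing only a worse constant that disappears in the infimum. The only point worth making explicit is that the renormalisation requires $\mu<1$, which holds once $\beta+\delta<\alpha_{0}$ --- harmless, since you fix $\alpha_{0}$ and let $\beta,\delta\to0$.
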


\section{The space $JT_\infty$.}
\par
\bigskip

We shall begin with the  construction of $JT_\infty$ space. Define

$$T:=\{(\alpha_1,\ldots, \alpha_k)\ /\ k\in\mathbb N, \alpha_1,\ldots, \alpha_n\in\mathbb N\}\cup \{\emptyset\}.$$
Given $(\alpha_1,\ldots, \alpha_k),(\beta_1,\ldots, \beta_p)\in T$
we say that

$$(\alpha_1,\ldots, \alpha_k)\leq (\beta_1,\ldots, \beta_p)\Leftrightarrow \left\{\begin{array}{cc}
k\leq p & \ \\
\alpha_i=\beta_i & \forall i\in\{1,\ldots,k\}
\end{array}\right. .$$
Last relation defines a partial order defining $\vert
(\alpha_1,\ldots, \alpha_n)\vert=n$ and $\vert \emptyset\vert=0$.

By a segment we mean a subset $S\subseteq T$ totally ordered and finite.

Given $x:T\longrightarrow \mathbb R$ we consider

$$\Vert x\Vert=\sup\left( \sum_{i=1}^n \left( \sum_{t\in S_i}x(t)\right)^2\right)^\frac{1}{2},$$
where the sup is taken over all families $\{S_1,\ldots,S_n\}$
of disjoint segments in $T$.

Then $JT_\infty$ is defined as the completion of  the space of
finitely nonzero functions defined on $T$ in the above norm. Given
$\alpha\in T$ define

$$e_\alpha(\beta):=\left\{\begin{array}{cc}
1 & \mbox{if } \beta=\alpha\\
0 & \mbox{otherwise}
\end{array} \right.$$
Then $\{e_\alpha\}_{\alpha\in T}$ defines a Schauder basis on
$JT_\infty$. Denote by $\{e_\alpha^*\}_{\alpha\in T}$ the
biorthogonal sequence and let $B_{\infty}:=\overline{span}\{e_t^*\ /\ t\in
T\}$.

The space $JT_\infty$ was  introduced in \cite{goma}, where it is
proved that $B_{\infty}$, being the predual of  $JT_\infty$,   fails the Radon-Nikodym property. Furthermore, every infinite subspace
of $JT_\infty$ contains an isomorphic copy of $\ell_2$ and so
$JT_\infty$ does not contain isomorphic copies of $\ell_1$.

Now we pass to talk about the  size of slices in
$B_{JT_\infty^*}$. As in \cite{scsewe}, we define a molecule as a
functional of the form

$$x^*:=\sum_{i=1}^n \lambda_i f_{S_i}$$
for $S_1,\ldots, S_n$ disjoint segments in $T$ and $\sum_{i=1}^n
\lambda_i^2\leq 1$, where

$$f_S(x)=\sum_{t\in S} x(t)$$
for $S\subseteq T$ a segment.

Denote by $M$ the set of molecules in $JT_\infty^*$ and note that
$M\subseteq B_{JT_\infty^*}$.

We shall begin with the following Lemma, which shows that $M$ is a norming subset of $B_{JT_\infty^*}$.

\begin{lemma}\label{normantejt}\
$M$ is a norming subset of $B_{JT_\infty^*}$. As a consequence

\begin{equation}\label{describoladualjt}
B_{JT_\infty^*}=\overline{co}^{w^*}(M).
\end{equation}

\end{lemma}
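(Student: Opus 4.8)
The plan is to establish the norming equality $\sup_{x^*\in M}x^*(x)=\Vert x\Vert$ for every $x\in JT_\infty$ by a direct computation, and then to read off \eqref{describoladualjt} as a formal consequence. Since the finitely supported functions are dense in $JT_\infty$ and both sides depend continuously on $x$ (each being $1$-Lipschitz), it suffices to argue for such $x$; the case $x=0$ is trivial, so assume $\Vert x\Vert>0$.

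The inequality $\sup_{x^*\in M}x^*(x)\le\Vert x\Vert$ is where Cauchy--Schwarz enters. For any molecule $x^*=\sum_{i=1}^n\lambda_i f_{S_i}$, with the $S_i$ disjoint segments and $\sum_{i=1}^n\lambda_i^2\le 1$, one has
$$x^*(x)=\sum_{i=1}^n\lambda_i f_{S_i}(x)\le\Big(\sum_{i=1}^n\lambda_i^2\Big)^{1/2}\Big(\sum_{i=1}^n f_{S_i}(x)^2\Big)^{1/2}\le\Big(\sum_{i=1}^n f_{S_i}(x)^2\Big)^{1/2}\le\Vert x\Vert,$$
the last step being the very definition of the norm evaluated on the family $\{S_1,\dots,S_n\}$; incidentally this reconfirms $M\subseteq B_{JT_\infty^*}$. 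For the reverse inequality I would reverse the use of Cauchy--Schwarz: given any family $\{S_1,\dots,S_n\}$ of disjoint segments, put $c=\big(\sum_{i=1}^n f_{S_i}(x)^2\big)^{1/2}$ and, when $c>0$, choose $\lambda_i=f_{S_i}(x)/c$. Then $\sum_i\lambda_i^2=1$, so $x^*=\sum_i\lambda_i f_{S_i}$ is a molecule and $x^*(x)=c^{-1}\sum_i f_{S_i}(x)^2=c$. Taking the supremum over all families of disjoint segments gives $\sup_{x^*\in M}x^*(x)\ge\Vert x\Vert$, and the two bounds together yield equality. Because $M$ is symmetric ($-x^*\in M$ whenever $x^*\in M$), this also reads $\sup_{x^*\in M}|x^*(x)|=\Vert x\Vert$, which is precisely the statement that $M$ is norming.

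The consequence \eqref{describoladualjt} is then a separation argument. The inclusion $\overline{co}^{w^*}(M)\subseteq B_{JT_\infty^*}$ is immediate from $M\subseteq B_{JT_\infty^*}$ together with the $w^*$-closedness and convexity of the ball. For the reverse inclusion, if some $x_0^*\in B_{JT_\infty^*}$ did not belong to the $w^*$-closed convex set $\overline{co}^{w^*}(M)$, the Hahn--Banach separation theorem in the $w^*$ topology---whose continuous functionals are exactly the elements of $JT_\infty$---would furnish $x\in JT_\infty$ with
$$x_0^*(x)>\sup_{y^*\in\overline{co}^{w^*}(M)}y^*(x)\ge\sup_{y^*\in M}y^*(x)=\Vert x\Vert\ge x_0^*(x),$$
a contradiction. (Equivalently, one may invoke the bipolar theorem in the dual pair $\langle JT_\infty,JT_\infty^*\rangle$: since $M$ is balanced and the norming equality says exactly that its absolute polar is $M^{\circ}=B_{JT_\infty}$, one gets $\overline{co}^{w^*}(M)=M^{\circ\circ}=B_{JT_\infty^*}$.) I do not expect a genuine obstacle here: the whole content is the reverse inequality of the first step, namely the observation that choosing the coefficients $\lambda_i$ proportional to $f_{S_i}(x)$ lets a single molecule recover the Euclidean expression defining $\Vert x\Vert$; everything else is Cauchy--Schwarz and a standard duality argument.
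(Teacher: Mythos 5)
Your proposal is correct and follows essentially the same route as the paper: the key step in both is to take a family of disjoint segments nearly attaining the norm and form the molecule with coefficients proportional to $f_{S_i}(x)$, then deduce \eqref{describoladualjt} by Hahn--Banach separation. Your normalization $\lambda_i=f_{S_i}(x)/c$ gives the norming equality exactly rather than up to $\varepsilon$, which is a minor cosmetic improvement, not a different argument.
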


\begin{proof}
Let $x\in S_{JT_\infty}$ a finitely nonzero function  defined on
$T$. Pick an arbitrary $0<\varepsilon<1$ and  take $0<\delta<1$
such that $(1-\delta)^2>1-\varepsilon.$ By the definition of the
norm in $JT_\infty$ we deduce that there exist $S_1,\ldots, S_n$
disjoint segments in $T$ such that

$$\left(\sum_{i=1}^n f_{S_i}(x)^2\right)^\frac{1}{2}>1-\delta.$$
For every $i\in\{1,\ldots, n\}$ define $\lambda_i:=f_{S_i}(x)$ and
note that by the definition of the norm in $JT_\infty$  it is
clear that $\sum_{i=1}^n \lambda_i^2\leq 1$. Moreover, in view of
last inequality, we have

$$\sum_{i=1}^n \lambda_i f_{S_i}(x)=\sum_{i=1}^n f_{S_i}(x)^2>(1-\delta)^2>1-\varepsilon.$$
As a consequence we can find in $M$ elements whose evaluation in
$x$ is as close to $\Vert x\Vert$ as desired. Hence $M$ is a
norming subset of $B_{JH_\infty^*}$.

From a separation argument we get now that
$B_{JT_\infty^*}=\overline{co}^{w^*}(M)$.

\end{proof}

Using previous lemma, we will prove that there exist $w^*$-slices in $B_{JT_\infty^*}$ with diameter strictly less than 2.

\begin{theorem}\
There exists $x\in S_{JT_\infty}$ such that

$$\inf\limits_{\alpha>0}diam\ S(B_{JT_\infty^*},x,\alpha)\leq \sqrt{2}.$$

\end{theorem}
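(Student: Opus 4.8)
The plan is to exhibit an explicit $x\in S_{JT_\infty}$ and use Theorem~\ref{teofundajulang} together with the description $B_{JT_\infty^*}=\overline{co}^{w^*}(M)$ from Lemma~\ref{normantejt} to reduce the computation of slice diameters over the whole dual ball to slice diameters over the molecule set $M$. The natural candidate is a single basis vector, say $x=e_\emptyset$ (or $e_t$ for the root/a leaf), since such an $x$ is supported on a single node and therefore interacts with a segment $S$ only according to whether that node lies on $S$. With this reduction in place, I would estimate $\inf_{\alpha>0}\mathrm{diam}\,S(M,x,\alpha)$ directly.

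First I would analyze how a molecule $x^*=\sum_{i=1}^n\lambda_i f_{S_i}$ acts on $x=e_t$: we get $x^*(e_t)=\sum_{i: t\in S_i}\lambda_i$, and since the segments $S_i$ are disjoint, at most one of them contains $t$. Hence $x^*(x)=\lambda_{i_0}$ for the unique segment $S_{i_0}\ni t$ (or $0$ if no segment passes through $t$), and in particular $x^*(x)\le 1$ for every molecule. Since $\|x\|=1$ and $M$ is norming, $\sup_{x^*\in M}x^*(x)=1$. Thus for small $\alpha$, a molecule $x^*$ lies in $S(M,x,\alpha)$ only when its coefficient $\lambda_{i_0}$ on the segment through $t$ is close to $1$; because $\sum_i\lambda_i^2\le 1$, this forces $\lambda_{i_0}\to 1$ and all the other coefficients $\lambda_i\to 0$ as $\alpha\to 0$.

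Next I would bound the diameter. Take two molecules $x^*,y^*$ in the slice $S(M,x,\alpha)$. Each is, up to a controllably small error (the part carried by the small coefficients, whose total $\ell_2$-mass is $\le\sqrt{1-(1-\alpha)^2}$), essentially equal to a single segment functional $f_{S}$ and $f_{S'}$ with both $S,S'$ passing through $t$ and with coefficient near $1$. The key computation is then to estimate $\|f_S-f_{S'}\|$ for two segments sharing the node $t$. I expect that $\{S,S'\}$ need not be disjoint, but $f_S-f_{S'}$ can be rewritten in terms of the disjoint pieces $S\setminus S'$ and $S'\setminus S$, and evaluating the norm of such a difference against the defining supremum over disjoint segment families yields $\|f_S-f_{S'}\|\le\sqrt{2}$. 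Combining the two molecule contributions and letting $\alpha\to 0$ so that the error terms vanish gives $\mathrm{diam}\,S(M,x,\alpha)\le\sqrt{2}+o(1)$, hence $\inf_{\alpha>0}\mathrm{diam}\,S(M,x,\alpha)\le\sqrt{2}$.

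Finally I would invoke Theorem~\ref{teofundajulang} to transfer this bound from $M$ to the full unit ball, concluding $\inf_{\alpha>0}\mathrm{diam}\,S(B_{JT_\infty^*},x,\alpha)\le\sqrt{2}$. The main obstacle I anticipate is the norm estimate $\|f_S-f_{S'}\|\le\sqrt{2}$ for segments through a common node: one must carefully choose the right competing family of disjoint segments in the definition of the $JT_\infty$ norm to get the clean constant $\sqrt{2}$ rather than a larger bound, and handle the cross terms coming from the small-coefficient tails so that they genuinely contribute $o(1)$ as $\alpha\to 0$. Getting the constant to be exactly $\sqrt{2}$, rather than $2$, is the delicate point and is what makes this a genuine diameter-strictly-less-than-two statement.
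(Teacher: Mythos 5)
Your proposal is correct and follows essentially the same route as the paper: reduce to the norming set $M$ of molecules via Lemma \ref{normantejt} and Theorem \ref{teofundajulang}, isolate the dominant segment whose coefficient is forced close to $1$, absorb the $\ell_2$-small tail by H\"older's inequality, and bound the difference of the two dominant segment functionals by $\sqrt{2}$ using that $S\setminus S'$ and $S'\setminus S$ are disjoint segments. The only deviation is your simpler slicing vector $x=e_\emptyset$ in place of the paper's $(1-\varepsilon)e_\emptyset+\varepsilon e_{(1)}$, which spares you the paper's case analysis forcing the dominant segment to contain both nodes; your key estimate $\Vert f_S-f_{S'}\Vert\le\sqrt{2}$ indeed holds (by Cauchy--Schwarz against the $\ell_2$-sum in the norm) for any two segments, intersecting or not.
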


\begin{proof}
Let $0<\varepsilon<1/2$. Pick $0<\delta<\min\{\varepsilon,\ 2\varepsilon(1-\varepsilon)\}$ and $0<\alpha<1/2$ such that $(1-\alpha)^2>1-\delta$. Define

$$x:=(1-\varepsilon)e_\emptyset+\varepsilon e_{(1)}\in S_{JT_\infty}.$$

Consider $S:=S(M,x,\alpha)$. Take $\sum_{i=1}^n \lambda_if_{S_i},\sum_{j=1}^m \mu_j f_{T_j}\in S$.

In view of the form of $x$ we can ensure the existence of $i\in\{1,\ldots,n\}, j\in\{1,\ldots,m\}$ such that $\{\emptyset,(1)\}\subseteq S_i\cap  T_j$. Indeed, it is clear that $(\cup_{i=1}^n S_i)\cap\{\emptyset,(1)\}\neq \emptyset$, since $\sum_{i=1}^n \lambda_if_{S_i}\in S$. Now it is not possible that $(\cup_{i=1}^n S_i)\cap\{\emptyset,(1)\}=\{(\emptyset)\}$ nor $(\cup_{i=1}^n S_i)\cap\{\emptyset,(1)\}=\{(1)\}$, since $0<\varepsilon<1/2$, $0<\alpha<1/2$, $\sum_{i=1}^n \lambda_i^2\leq 1$ and $\sum_{i=1}^n \lambda_if_{S_i}\in S$. Finally,  it is not possible that there exist $i\neq j$ such that $\{(\emptyset)\}\in S_i$ and $\{(1)\}\in S_j$, since if this is the case, we have that $(1-\varepsilon)\lambda_i+\varepsilon \lambda_j>1-\alpha$. Hence
$$(1-\alpha)^2<((1-\varepsilon)^2+\varepsilon^2)(\lambda_i^2+\lambda_j^2)\leq  (1-\varepsilon)^2+\varepsilon^2$$
and thus, using the conditions on $\alpha$, $\delta$ and $\varepsilon$, we get
$$1-2(\varepsilon(1-\varepsilon))<1-\delta<(1-\varepsilon)^2+\varepsilon^2,$$ which is a contradiction. This proves the existence of $i$ such that $\{\emptyset,(1)\}\subseteq S_i$. The same argument proves the existence of $j$ such that $\{\emptyset,(1)\}\subseteq T_j$.
Of course,  we assume without loss of generality that $i=j=1$. Now

$$\sum_{i=1}^n \lambda_i f_{S_i}(x)=\lambda_1(1-\varepsilon
+\varepsilon)=\lambda_1>1-\alpha\Rightarrow \lambda_1^2>(1-\alpha)^2>1-\delta.$$

As $\sum_{i=1}^n \lambda_i^2\leq 1$ then $\sum_{i=2}^n \lambda_i^2<\delta$. By a similar argument $\mu_1^2>1-\delta$ and hence $\sum_{j=2}^m \mu_j^2<\delta$.

In order to estimate $\left\Vert\sum_{i=1}^n \lambda_i f_{S_i}-\sum_{j=1}^m\mu_j f_{T_j}\right\Vert$ pick $y\in S_{JT_\infty}$. Hence

$$\left\vert\left(\sum_{i=1}^n \lambda_i f_{S_i}-\sum_{j=1}^m\mu_j f_{T_j}\right)(y)\right\vert\leq \mathop{\underbrace{\vert\lambda_1 f_{S_1}(y)-\mu_1 f_{T_1}(y)\vert}}\limits_{\mbox{{\large{A}}}}
+$$ $$\mathop{\underbrace{\left\vert \sum_{i=2}^n \lambda_i f_{S_i}(y)-\sum_{j=2}^m \mu_j f_{T_j}(y)\right\vert}}
\limits_{\mbox{{\large{B}}}}.$$

We shall begin estimating $B$. In view of H\"older's inequality we have

$$B\leq \sum_{i=2}^n \vert \lambda_i\vert\vert f_{S_i}(y)\vert+\sum_{j=2}^m \vert\mu_j\vert\vert f_{T_j}(y)\vert\leq$$

$$\leq \left(\sum_{i=2}^n \lambda_i^2+\sum_{j=2}^m \mu_j^2 \right)^\frac{1}{2}\left(\sum_{i=2}^n f_{S_i}(y)^2+\sum_{j=2}^m f_{T_j}(y)^2\right)^\frac{1}{2}\leq (2\delta)^\frac{1}{2}2^\frac{1}{2}=2\sqrt{\delta}$$

because $\sum_{i=2}^n f_{S_i}(y)^2\leq \Vert y\Vert^2=1, \sum_{j=2}^m f_{T_j}(y)^2\leq 1$ due to the disjointness of  $\{S_2,\ldots,S_n\}$ and $\{T_2,\ldots,T_m\}$.
 So $B\leq 2\sqrt{\delta}$.  Now we will estimate $A$:

$$A\leq \vert \lambda_1-\mu_1\vert \vert f_{T_1\cap S_1}(y)\vert+\vert \lambda_1\vert \vert f_{S_1\setminus T_1}(y)\vert+\vert\mu_1\vert\vert  f_{T_1\setminus S_1}(y)\vert.$$

As $1\geq \lambda_1>1-\alpha$ and $1\geq \mu_1>1-\alpha$ then $\vert \lambda_1-\mu_1\vert<\alpha$. Hence

$$A\leq \alpha\Vert f_{T_1\cap S_1}\Vert \Vert y\Vert+\vert \lambda_1\vert \vert f_{S_1\setminus T_1}(y)\vert+\vert\mu_1\vert\vert  f_{T_1\setminus S_1}(y)\vert=$$

$$\alpha+\vert \lambda_1\vert \vert f_{T_1\setminus S_1}(y)\vert+\vert\mu_1\vert\vert  f_{T_1\setminus S_1}(y)\vert\leq \alpha+\vert f_{S_1\setminus T_1}(y)\vert+\vert f_{T_1\setminus S_1}(y)\vert.$$

Again applying H\"older's inequality we have

$$A\leq \alpha+\sqrt{2}\left(f_{S_1\setminus T_1}(y)^2+f_{T_1\setminus S_1}(y)^2\right)^\frac{1}{2}.$$

As $\{S_1\setminus T_1,T_1\setminus S_1\}\subseteq T$ is a family of disjoint segments we have that $f_{S_1\setminus T_1}(y)^2+f_{T_1\setminus S_1}(y)^2\leq \Vert x\Vert^2=1$. Hence

$$A\leq \alpha+\sqrt{2}.$$

Summarizing

$$\left\vert\left(\sum_{i=1}^n \lambda_i f_{S_i}-\sum_{j=1}^m\mu_j f_{T_j}\right)(y)\right\vert\leq \alpha+\sqrt{2}+2\sqrt{\delta}.$$

From the arbitrariness of $y\in S_{JT_\infty}$ we have that

$$\left\Vert\sum_{i=1}^n \lambda_i f_{S_i}-\sum_{j=1}^m \mu_j f_{T_j} \right\Vert=\sup\limits_{y\in S_{JT_\infty}}\left\vert\left(\sum_{i=1}^n \lambda_i f_{S_i}-\sum_{j=1}^m\mu_j f_{T_j}\right)(y)\right\vert\leq$$ $$\sqrt{2}+\alpha+2\sqrt{\delta}.$$

Hence

$$diam(S)\leq \sqrt{2}+\alpha+2\sqrt{\delta}.$$

So

$$\inf\limits_{\alpha>0} diam(S(M,x,\alpha))\leq \sqrt{2}+2\sqrt{\delta}.$$

Since $0<\delta<\varepsilon$ was arbitrary we deduce that

$$\inf\limits_{\alpha>0} diam(S(M,x,\alpha))\leq \sqrt{2}.$$

In view of Lemma \ref{normantejt}, Theorem \ref{teofundajulang} applies and

$$\inf\limits_{\alpha>0}diam(S(B_{JT_\infty^*},x,\alpha)),$$

so we are done.

\end{proof}

In view of previous theorem, for each $0<\varepsilon<2-\sqrt{2}$ we can find $S$ a $w^*$-slice in $B_{JT_\infty^*}$ such that $diam(S)<\sqrt{2}+\varepsilon$. In particular, $JT_\infty^*$ fails to have the $w^*$-slice diameter two property and hence $B_{\infty}$ fails every diameter two property, since the inf of the diameters of slices in the unit ball of $B_{\infty}$ agrees with the inf of the diameters of $w^*$-slices in the unit ball of $JT_{\infty}^*$. In fact, this inf is, at most, $\sqrt{2}$. Also, it is possible obtaining the same result for the space $B$, the predual of $JT$, with the above proof, which shows that the conjecture in \cite{scsewe} that the inf of diameters of slices in the unit ball in $B$ is, at most, $\sqrt{2}$  holds.

\section{The space $JH$.}
\par
\bigskip

We shall begin with the construction of $JH$ space. Following \cite{fega} we denote by

$$T:=\left\{(n,i)\ /\ 0\leq n<\infty, 0\leq i< 2^n\right\}$$
the diadic tree. We say that $(n+1,2i)$ and $(n+1,2i+1)$ are
offspring of $(n,i)$ for every $(n,i)\in T$. A segment will be a
non-empty finite sequence

$$S=\{t_1,\ldots, t_n\}$$
such that $t_{j+1}$ is an offspring of $t_j$ for every $j\in
\{1,\ldots, n-1\}$.

Now we are ready to define  a partial order in $T$: given
$t_1,t_2\in T$ we say that $t_1<t_2$ if, and only if, $t_1\neq
t_2$ and there exists a segment such that $t_1$ is the first
element of the segment and $t_2$ is the last element on it.

The set

$$\{(n,i)\ /\ 0\leq i<2^n\}$$
is called the $n$-th level of $T$ for every $0\leq n<\infty$.

Given $n,m\in\mathbb N, n\leq m$ we will say that a subset $S\subseteq T$ is an $n-m$ segment if

\begin{itemize}
\item For every $n\leq k\leq m$ there exists an only element in $S$ which is in the $k$-th level of $T$,
\item If $(p,i),(q,j)\in S$ and $p<q$ then $(p,i)<(q,j)$ (in other words, $S$ is a totally ordered subset of $T$).
\end{itemize}

Given $x:T\longrightarrow \mathbb R$ and  $S\subseteq T$  a segment in $T$,  we define

$$f_S(x):=\sum_{t\in S} x(t).$$
Note that the above sum is well defined because $S$ is finite.

Given $\{S_1,\ldots,S_n\}$ a family of segments in $T$ we say that are admissible if:

\begin{enumerate}
\item[i)] There exist $p\leq q$ natural numbers such that $S_i$ is
an $p-q$ segment for every $i\in\{1,\ldots, n\}$. \item[ii)]
$S_i\cap S_j=\emptyset$ whenever $i\neq j$.
\end{enumerate}

Given $x:T\longrightarrow \mathbb R$ a finitely nonzero function we define

$$\Vert x\Vert:=\max \sum_{i=1}^n\vert f_{S_i}(x)\vert=\max \sum_{i=1}^n \left\vert \sum_{t\in S_i} x(t) \right\vert,$$
where the maximum is taken over all families $S_1,\ldots, S_n$ of
admissible segments in $T$.

Now define $JH$ as the completion of the space of finitely nonzero functions on $T$ in the above norm.

Given $t\in T$ we define $e_t\in JH$ by the equation

$$e_t(s):=\left\{\begin{array}{cc}
1 & \mbox{if } t=s\\
0 & \mbox{otherwise}
\end{array} \right. .$$
Then $\{e_t\}_{t\in T}$ defines a Schauder basis on $JH$.

$JH$ space was introduced by J.Hagler in \cite{hag}. It  is proved
in that paper that $JH$ is a separable Banach space such that
$JH^*$ is not separable and that every infinite-dimensional
subspace of $JH$ contains an isomorphic copy of $c_0$. In
particular $JH$ contains an isomorphic copy of $c_0$, so it can
not be a dual space \cite[Proposition 2.e.8]{litza}.

\begin{lemma}\label{lemanormaelenuevo}

Let $x:T\longrightarrow \mathbb R$ a finitely non-zero function and $n\in\mathbb N\setminus\{1\}$ such that

$$\Vert x\Vert\leq 1-\frac{1}{n}.$$

Pick $a\in T$ such that $lev(a)>\max\limits_{t\in supp(x)} lev(t)$. Let $\ell\in\mathbb N$ big enough such that there exists $t_1,\ldots, t_n\in T$ such that

\begin{itemize}
\item $lev(t_i)=\ell$ for each $i$.
\item $a<t_i$ for all $i$.
\end{itemize}

If we define $y:T\longrightarrow \mathbb R$ such that

$$y(t):=\left\{\begin{array}{cc}
x(t) & t\in supp(x)\\
\mu_i\frac{1}{n} & t=t_i\ i\in\{1,\ldots, n\}, \mu_i\in\{-1,1\} \\
0 & \mbox{otherwise}
\end{array}\right . ,$$

then $\Vert y\Vert\leq 1$.

\end{lemma}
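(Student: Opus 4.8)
The plan is to bound directly the quantity $\sum_{j=1}^k \vert f_{S_j}(y)\vert$ over an arbitrary admissible family $S_1,\ldots,S_k$ of segments and to check it never exceeds $1$; since $\Vert y\Vert$ is the maximum of such sums, this yields $\Vert y\Vert\leq 1$. So fix such a family, recalling that all the $S_j$ are $p$-$q$ segments for one common pair $p\leq q$ and are pairwise disjoint. The first observation is that each $S_j$ is totally ordered and so meets the level $\ell$ in at most one point, whereas $t_1,\ldots,t_n$ are $n$ distinct points of that level; hence each segment contains at most one of the $t_i$. Writing $f_{S_j}(y)=f_{S_j}(x)+\varepsilon_j$, where $\varepsilon_j$ collects the contribution of the new points, we therefore have $\varepsilon_j\in\{-\tfrac1n,0,\tfrac1n\}$ for every $j$.

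I would then split into three exhaustive cases according to how $[p,q]$ sits relative to $\ell$ and $lev(a)$. If $\ell\notin[p,q]$, no segment reaches level $\ell$, so every $\varepsilon_j=0$ and the sum equals $\sum_j\vert f_{S_j}(x)\vert\leq\Vert x\Vert\leq 1-\tfrac1n$, which is fine.

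The decisive case is $\ell\in[p,q]$ together with $p\leq lev(a)$; since $lev(a)<\ell\leq q$ this puts $lev(a)\in[p,q]$, so every segment has exactly one point at level $lev(a)$. The key geometric remark is that, as $a<t_i$ and $lev(a)<\ell$, the unique ancestor of each $t_i$ at level $lev(a)$ is precisely $a$; consequently a segment can contain some $t_i$ only if its point at level $lev(a)$ equals $a$. Disjointness of the family then forces at most one segment, say $S_1$, to pass through $a$, whence $\varepsilon_j=0$ for all $j\geq 2$. Using $\vert f_{S_1}(x)+\varepsilon_1\vert\leq\vert f_{S_1}(x)\vert+\tfrac1n$, I would bound the whole sum by $\sum_j\vert f_{S_j}(x)\vert+\tfrac1n\leq\Vert x\Vert+\tfrac1n\leq 1$.

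The remaining case is $\ell\in[p,q]$ with $p>lev(a)$. Now every segment starts strictly above level $lev(a)$, hence above the entire support of $x$, so $f_{S_j}(x)=0$ and $f_{S_j}(y)=\varepsilon_j$ for all $j$. Since $t_1,\ldots,t_n$ are only $n$ points and, by disjointness, each lies in at most one segment, at most $n$ of the $\varepsilon_j$ are nonzero, each of modulus $\tfrac1n$; thus $\sum_j\vert f_{S_j}(y)\vert\leq n\cdot\tfrac1n=1$. The main obstacle is isolating the correct case distinction and, in the decisive middle case, recognizing that the common ancestor $a$ together with disjointness permits only a single segment to collect a new point; once that is seen, all the estimates are elementary.
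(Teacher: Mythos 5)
Your proof is correct and follows essentially the same strategy as the paper's: both rest on the observation that a segment can pick up one of the new nodes $t_i$ only by passing through their common ancestor $a$, so that disjointness of the admissible family limits the extra contribution to $\frac{1}{n}$ whenever the support of $x$ is also in play. The only difference is organizational: you split the cases by the position of the common level window $[p,q]$ relative to $\ell$ and $lev(a)$, whereas the paper splits according to whether the segments meet $\{t_1,\ldots,t_n\}$ and $supp(x)$ and decomposes each $S_j$ into its parts below and above level $\ell$; your version avoids that decomposition, but the key geometric remark and the final estimates are identical.
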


\begin{proof}

Let $\{S_1,\ldots, S_k\}$ be a family of admissible segments in $T$, $\lambda_1,\ldots, \lambda_k\in\{-1,1\}$ and define

$$x^*:=\sum_{i=1}^k \lambda_i f_{S_i}.$$

In order to prove that $\Vert y\Vert\leq 1$ we have to prove that $x^*(y)\leq 1$, following the definition of the norm in $JH$.  

By cases:

\begin{enumerate}
\item $\bigcup\limits_{i=1}^k S_i\cap \{t_1,\ldots, t_n\}=\emptyset$.

In this case we have, in view of the definition of $y$ that

$$x^*(y)=x^*(x)\leq \Vert x\Vert\leq 1-\frac{1}{n}$$

by hypothesis.
\item $\bigcup\limits_{i=1}^k S_i\cap \{t_1,\ldots, t_n\}\neq\emptyset$ but $\bigcup\limits_{i=1}^k S_i\cap supp(x)=\emptyset$.

In this case we have

$$x^*(y)=\sum_{i=1}^k \lambda_i f_{S_i}(y)\leq \sum_{i=1}^n \frac{1}{n}=1$$

\item  $\bigcup\limits_{i=1}^k S_i\cap \{t_1,\ldots, t_n\}\neq\emptyset$ and $\bigcup\limits_{i=1}^k S_i\cap supp(x)\neq\emptyset$.

Finally, in this case we have that there exists one only $i\in\{1,\ldots, n\}$ such that $a\in S_i$ (otherwise $\bigcup\limits_{i=1}^k S_i\cap \{t_1,\ldots, t_n\}=\emptyset$ in view of the order defined on $T$).
We can assume, without of generality, that $i=1$. If $S_j$ is a $p-q$ segment, we can write

$$S_j:=T_j\cup R_j$$

where $T_j$ is a $p-(\ell-1)$ segment and $R_j$ is a $\ell-q$ segment for each $j\in\{1,\ldots, k\}$.

In view of the disjointness of $S_1,\ldots, S_k$ we have  for each $j\in\{2,\ldots, n\}$ that $S_j\cap \{t_1,\ldots, t_n\}=$ $\emptyset$. In addition, as $\ell>\max\limits_{t\in supp(x)}lev(t)$ we deduce that

$$f_{R_i}(y)=0\ \forall i\in\{2,\ldots, n\}.$$

Hence

$$x^*(y)=\sum_{i=1}^k \lambda_i f_{T_i}(y)+\lambda_1 f_{R_1}(y).$$

Now we have that $\{T_1,\ldots, T_k\}$ is a family of admissible segments on $T$. Hence

$$x^*(y)\leq \Vert x\Vert+\lambda_1 f_{R_1}(x)\leq 1-\frac{1}{n}+f_{R_1}(y).$$

Now, as $\{t_1,\ldots, t_n\}$ are incomparable nodes on $T$ at the same level we have that $\{t_1,\ldots, t_n\}\cap R_1$ has one element. Hence

$$x^*(y)\leq  1-\frac{1}{n}+f_{R_1}(y)\leq1-\frac{1}{n}+\frac{1}{n}=1.$$

\end{enumerate}

By the previous discussion we deduce that $\Vert y\Vert\leq 1$ as desired.

\end{proof}

\begin{theorem}\label{sliced2pjh}

$JH$ has the strong diameter two property (and so the norm of $JH^*$ is octahedral).

\end{theorem}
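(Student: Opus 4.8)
The plan is to establish the strong diameter two property directly. Let $C=\sum_{i=1}^m\lambda_i S_i$ be an arbitrary convex combination of slices of $B_{JH}$, with $S_i=S(B_{JH},x_i^*,\alpha_i)$ and, without loss of generality, $\|x_i^*\|=1$; I will produce two points $u,v\in C$ with $\|u-v\|=2$. Since $\sup x_i^*(B_{JH})=1$, each slice is $S_i=\{x\in B_{JH}:x_i^*(x)>1-\alpha_i\}$, and by density of the finitely supported functions I may pick a finitely supported $y_i\in S_i$ with $x_i^*(y_i)>1-\alpha_i/2$. The octahedrality of $JH^*$ then follows at once from the equivalence recorded in the introduction, so it suffices to prove the strong diameter two property.

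The core is a single common perturbation. Fix a large $n$ (specified below) and set $\tilde y_i:=(1-\tfrac1n)y_i$, so $\|\tilde y_i\|\le 1-\tfrac1n$ and, for $n$ large, $x_i^*(\tilde y_i)=(1-\tfrac1n)x_i^*(y_i)>1-\alpha_i$ with a definite margin. Choose a node $a\in T$ of level larger than the level of every node in the finite supports of $\tilde y_1,\dots,\tilde y_m$, and, as $T$ is dyadic, a level $\ell$ together with incomparable nodes $t_1,\dots,t_n$ at level $\ell$ satisfying $a<t_j$ for all $j$. For signs $\mu_j\in\{-1,1\}$ set $w:=\sum_{j=1}^n\mu_j\tfrac1n e_{t_j}$. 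Because the $t_j$ are incomparable and share a common level, any family of pairwise disjoint admissible segments meets $\{t_1,\dots,t_n\}$ in at most one point per segment, and testing against the singleton segments $\{t_j\}$ gives $\|w\|=1$ for \emph{every} choice of signs. Applying Lemma \ref{lemanormaelenuevo} to $x=\tilde y_i$, first with the signs $\mu_j$ and then with $-\mu_j$, yields $\|\tilde y_i\pm w\|\le 1$ for all $i$.

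It remains to keep the perturbed points inside their slices, that is, to arrange $|x_i^*(w)|$ below the margin above, simultaneously for all $i$. For this I use a second-moment argument on random signs: choosing $\mu_1,\dots,\mu_n$ independent and uniform in $\{-1,1\}$ gives $\mathbb{E}\,|x_i^*(w)|^2=\tfrac1{n^2}\sum_j x_i^*(e_{t_j})^2\le\tfrac1n$, since $|x_i^*(e_{t_j})|\le\|e_{t_j}\|=1$. Summing over the $m$ functionals, some deterministic choice of signs achieves $\sum_{i=1}^m|x_i^*(w)|^2\le m/n$, hence $\max_i|x_i^*(w)|\le\sqrt{m/n}$. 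Choosing $n$ large enough that both $1/n$ and $\sqrt{m/n}$ lie below $\min_i\alpha_i/4$ forces $\tilde y_i\pm w\in S_i$ for every $i$. Then $u:=\sum_i\lambda_i(\tilde y_i+w)=\sum_i\lambda_i\tilde y_i+w$ and $v:=\sum_i\lambda_i(\tilde y_i-w)=\sum_i\lambda_i\tilde y_i-w$ both lie in $C$ and satisfy $\|u-v\|=2\|w\|=2$, so $diam(C)=2$.

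The main obstacle is controlling the functionals: the $x_i^*$ are arbitrary elements of the nonseparable dual $JH^*$ and the Hagler basis is not shrinking, so one cannot make the coordinates $x_i^*(e_{t_j})$ small merely by taking the $t_j$ deep. The crux is the interplay of two structural facts: the common-level, incomparable placement of the $t_j$ pins $\|w\|=1$ independently of signs, while averaging over signs simultaneously tames all $m$ values $x_i^*(w)$; Lemma \ref{lemanormaelenuevo} then guarantees that the resulting bump is compatible with the unit ball.
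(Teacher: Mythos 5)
Your argument is correct, and it diverges from the paper's proof at the decisive step. Both proofs share the same skeleton: approximate each slice by a finitely supported point of norm at most $1-\tfrac1n$, plant a bump of total mass $1$ on $n$ incomparable nodes at a common deep level above a node $a$ dominating all supports, and invoke Lemma \ref{lemanormaelenuevo} to keep the perturbed points in $B_{JH}$. Where you part ways is in how the perturbation interacts with the slice functionals. The paper never controls $|x_i^*|$ on the bump: it \emph{aligns} the bump with the functional, using coefficients $\mathrm{sign}(x_i^*(e_{t_p^i}))\tfrac1m$, so that the functional value can only increase and slice membership is automatic. The price is that each $y_i$ and $z_i$ must use its own disjoint block of nodes $t_p^i$ (a different block for each $i$ and for the $y$/$z$ halves), and the final separation $\bigl\|\sum_i\lambda_i y_i-\sum_i\lambda_i z_i\bigr\|=2$ has to be extracted by hand from an explicit norming functional built out of the $2mn$ singleton segments. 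You instead use a single sign-symmetric bump $w$ with $\|w\|=1$ (correctly justified: incomparable nodes at one level meet any admissible disjoint family in at most one point per segment), add and subtract the \emph{same} $w$ in every slice, and tame all $m$ values $x_i^*(w)$ simultaneously by the Rademacher second-moment bound $\mathbb{E}\,|x_i^*(w)|^2\le 1/n$ plus pigeonhole; the diameter computation then collapses to $\|u-v\|=2\|w\|=2$ with no auxiliary functional needed. Your observation that one cannot simply send the $t_j$ deep (the basis is not shrinking) correctly identifies why some such device --- sign alignment in the paper, sign averaging in your version --- is unavoidable. Minor points that deserve a line in a final write-up: the reduction to $\|x_i^*\|=1$ and the extraction of a finitely supported $y_i\in S_i$ with $\|y_i\|\le 1$ (approximate and renormalize), and the explicit check that $\sqrt{m/n}+\tfrac1n$ fits under the margin $x_i^*(\tilde y_i)-(1-\alpha_i)>\alpha_i/2-\tfrac1n$; all of these go through as you indicate.
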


\begin{proof}

Let $C:=\sum_{i=1}^n \lambda_i S(B_{JH},x_i^*,\alpha)$ a convex combination of slices on $B_{JH}$. Let prove that $diam(C)=2$.

To this aim pick $x_i:T\longrightarrow \mathbb R$ a finitely non-zero supported function on $T$ such that $\Vert x_i\Vert<1$ and

$$x_i^*(x_i)>1-\alpha,$$

for each $i\in\{1,\ldots, n\}$. For each $i\in\{1,\ldots, n\}$ we can find $a_i\in supp(x_i)$ such that $lev(a_i)=\max\limits_{t\in supp(x_i)} lev(t)$.

As $\Vert x_i\Vert<1$ for each $i\in\{1,\ldots, n\}$ we can find $m\in\mathbb N$ such that $\Vert x_i\Vert\leq 1-\frac{1}{m}$ for each $i\in\{1,\ldots, n\}$. Now we can find $a\in T$ such that $lev(a)>\max\limits_{1\leq i\leq n} lev(a_i)$, $k>\max\limits_{1\leq i\leq n} lev(a_i)$ big enough and $\{t_1^i,\ldots, t_{2n}^i\}$ a family of nodes on $T$ at level $k$ such that $a<t_p^i$ for each $i\in\{1,\ldots, n\}, p\in\{1,\ldots, 2m\}$ and that

$$t_p^i\neq t_q^j\ \ \mbox{if }\ i\neq j\ \mbox{ or }\ p\neq q.$$

In other words, last condition guaranties that $\{t_p^i\ /\ i\in\{1,\ldots, n\}, j\in\{1,\ldots, 2m\}\}$ is a family of nodes pairwise different nodes at level $k$ which are bigger than $a$.

For each $i\in\{1,\ldots, n\}$ we define $y_i,z_i:T\longrightarrow \mathbb R$ a finitely non-zero function on $T$ as follows

$$y_i(t):=\left\{\begin{array}{ccc}
x_i(t) & \mbox{if} & t\in supp(x_i)\\
sign\left(x_i^*\left(e_{t_p^i}
\right)\right)\frac{1}{m} & t=t_p^i & p\in\{1,\ldots, m\}\\
0 & \ & \mbox{otherwise}
\end{array} \right.$$

and

$$z_i(t):=\left\{\begin{array}{ccc}
x_i(t) & \mbox{if} & t\in supp(x_i)\\
sign\left(x_i^*\left(e_{t_p^i}
\right)\right)\frac{1}{m} & t=t_p^i & p\in\{m+1,\ldots, 2m\}\\
0 & \ & \mbox{otherwise}
\end{array} \right. .$$

In view of Lemma \ref{lemanormaelenuevo} we have that $\Vert y_i\Vert\leq 1$ and $ \Vert z_i\Vert\leq 1$.

Let prove that, in fact, $y_i,z_i\in S(B_{JH},x_i^*,\alpha)$ for each $i\in\{1,\ldots, n\}$. To this aim pick $i\in\{1,\ldots, n\}$ and we shall prove that $y_i\in S(B_{JH},x_i^*,\alpha)$, being the case of $z_i$ similar. Using the linearity of $x_i^*$ we have

$$x_i^*(y_i)=x_i^*(x_i)+\sum_{p=1}^m\frac{1}{m}sign\left(x_i^*\left(e_{t_p^i}
\right)\right)x_i^*\left(e_{t_p^i}\right)=$$

$$=x_i^*(x_i)+\sum_{p=1}^m\frac{1}{m}\left\vert x_i^*\left(e_{t_p^i}\right)\right\vert\geq x_i^*(x_i)>1-\alpha.$$

Hence $\sum_{i=1}^n \lambda_i y_i,\sum_{i=1}^n \lambda_i z_i\in C$. Then

$$diam(C)\geq \left\Vert \sum_{i=1}^n \lambda_i y_i-\sum_{i=1}^n \lambda_i z_i\right\Vert.$$

Now we shall prove that $\left\Vert \sum_{i=1}^n \lambda_i y_i-\sum_{i=1}^n \lambda_i z_i\right\Vert=2$. To this aim check that $\{ \{t_p^i\}\ /\ i\in\{1,\ldots, n\}, p\in\{1,\ldots, 2m\}\}$ is a family of admissible segments on $T$. Hence

$$f:=\sum_{i=1}^n \sum_{p=1}^m sign\left(x_i^*\left(e_{t_p^i}
\right)\right)f_{\{t_p^i\}}-\sum_{p=m+1}^{2m} sign\left(x_i^*\left(e_{t_p^i}
\right)\right)f_{\{t_p^i\}}$$

is an element on $JH^*$ whose norm is less or equal to one (in view of the definition of the norm in $JH$). So

$$\left\Vert \sum_{i=1}^n \lambda_i y_i-\sum_{i=1}^n \lambda_i z_i\right\Vert\geq f\left(\sum_{i=1}^n \lambda_i y_i-\sum_{i=1}^n \lambda_i z_i\right)=$$

$$=\sum_{i=1}^n \lambda_i \frac{1}{m} \sum_{p=1}^m  sign\left(x_i^*\left(e_{t_p^i}
\right)\right)^2+\lambda_i\frac{1}{m} \sum_{p=m+1}^{2m}  sign\left(x_i^*\left(e_{t_p^i}
\right)\right)^2=$$

$$=2\sum_{i=1}^n \lambda_i =2.$$

So $\left\Vert \sum_{i=1}^n \lambda_i y_i-\sum_{i=1}^n \lambda_i z_i\right\Vert=2$, as wanted.

\end{proof}

We pass now to study the diameter two property on $JH^*$. Our aim
is to prove that $B_{JH^*}$ has $w^*$-slices with arbitrary small
diameter. In fact, we will find $x\in S_{JH}$ such that
$\inf\limits_{\alpha>0} diam(S(B_{JH^*},x,\alpha))=0$.

If we denote by

$$A:=\left\{\sum_{i=1}^n \lambda_i f_{S_i}\ \left/\begin{array}{c}
\lambda_i\in \{-1,1\}\\
\{S_1,\ldots, S_n\}\mbox{ is a family of admissible segments in }T
\end{array} \right. \right\},$$
it is clear that $A\subseteq B_{JH^*}$ is a norming subset (by the
definition of the norm on $JH$). Hence

$$\overline{co}^{w^*}(A)=B_{JH^*}$$
by Hahn-Banach theorem.

Now we are ready to show that $B_{JH^*}$ has $w^*$-slices of arbitrarily small diameter.

\begin{theorem}
There exists $x\in S_{JH}$ satisfying that

$$\inf\limits_{\alpha>0} diam(S(B_{JH^*},x,\alpha))=0.$$

\end{theorem}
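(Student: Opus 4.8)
The plan is to reduce, via the norming property of $A$ just established and Theorem \ref{teofundajulang}, to exhibiting a single direction $x\in S_{JH}$ with $\inf_{\alpha>0}diam(S(A,x,\alpha))=0$. The point to notice first is that the elements of $A$ are rigid: if $U,V$ are distinct segments then $\Vert f_U-f_V\Vert\geq 1$, since $f_U(e_a)=1$ when $a$ is the deepest node of $U$. Consequently a slice $S(A,x,\alpha)$ can have diameter close to $0$ only when it reduces to a single functional, so I would look for an $x$ whose norm is attained by one explicit admissible functional that is separated from every other element of $A$ by a uniform gap.

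The candidate is supported on a finite branch equipped with small negative penalties. Fix a root-path $t_0<t_1<\cdots<t_N$, choose positive weights $a_0>a_1>\cdots>a_N$ with $\sum_{n=0}^N a_n=1$, and a small $\beta>0$. Define $x(t_n)=a_n$ for $0\leq n\leq N$, set $x=-\beta$ on each off-branch sibling of a $t_n$ (for $1\leq n\leq N$) and on the two children of $t_N$, and $x=0$ elsewhere. The obvious norming functional is $x_0^*:=f_{\{t_0,\ldots,t_N\}}$, with $x_0^*(x)=1$; the negative weights are there precisely to forbid the two cheap ways of remaining near-optimal that would otherwise destroy rigidity, namely deviating off the branch and prolonging a segment past $t_N$ into the zero region.

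The heart of the matter is to prove that $\Vert x\Vert=1$, attained only by $x_0^*$, with a uniform gap: every other $x^*\in A$ satisfies $x^*(x)\leq 1-g$ for some $g>0$. I would organize this by the common starting level $p$ of an admissible family. When $p=0$ the uniqueness of the root forces the family to be a single segment issuing from $t_0$; deviating to a sibling costs $\beta$, overshooting $t_N$ costs $\beta$, and stopping before level $N$ forfeits at least $a_N$, so the only value exceeding $1-\min\{\beta,a_N\}$ is that of $x_0^*$ itself. When $p\geq 1$ the family never meets $t_0$; the only way to cash in a negative weight $-\beta$ without cancelling it against the positive branch mass is to let the whole family (which must share the level range $[p,q]$) start at that node's level, which already forfeits $a_0$. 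Choosing $\beta$ small compared with $a_0$ turns this into a strict net loss, bounding the value by, say, $1-a_0/2$.

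The step I expect to be the real obstacle is this last estimate: one must quantify how the admissibility constraint — all segments of a family sharing a single level range — prevents a family from simultaneously harvesting the positive branch mass (with sign $+1$) and the flipped negative mass (with sign $-1$), because any segment reaching a negative sibling from a lower level must first run along the positive branch and cancel against it. Granting the resulting gap $g>0$, for every $\alpha<g$ one has $S(A,x,\alpha)=\{x_0^*\}$, whence $diam(S(A,x,\alpha))=0$; thus $\inf_{\alpha>0}diam(S(A,x,\alpha))=0$, and Theorem \ref{teofundajulang} transfers this to $\inf_{\alpha>0}diam(S(B_{JH^*},x,\alpha))=0$, as required.
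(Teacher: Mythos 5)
Your proposal is correct and follows essentially the same route as the paper: both reduce to the norming set $A$ via Theorem \ref{teofundajulang} and exhibit a finitely supported norm-one $x$ whose norm is attained by a unique element of $A$ with a uniform gap (positive mass along a root segment, small penalties blocking deviation and overshoot), so that small slices of $A$ collapse to a singleton. The only difference is the explicit witness: the paper uses the concrete seven-node vector $(1-\varepsilon)e_{(0,0)}+\varepsilon e_{(1,0)}-\varepsilon e_{(1,1)}-\varepsilon e_{(2,0)}-\varepsilon e_{(2,1)}-\varepsilon e_{(2,2)}+\varepsilon e_{(2,3)}$ on levels $0$ through $2$, which plays exactly the role of your penalized branch.
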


\begin{proof}

Pick $0<\varepsilon<\frac{1}{4}$ and  let

$$x=(1-\varepsilon)e_{(0,0)}+\varepsilon e_{(1,0)}-\varepsilon e_{(1,1)}-\varepsilon e_{(2,0)}-\varepsilon e_{(2,1)}-\varepsilon e_{(2,2)}+\varepsilon e_{(2,3)}.$$
It is clear that $\Vert x\Vert\geq 1$ considering  the family of
admissible segments $\{\{(0,0),(1,0)\}\}$. It can also be checked
that if $\{S_1,\ldots, S_r\}$  is a family of admissible segments
in $T$ which is different of the family $\{\{(0,0),(1,0)\}\}$ then

$$\sum_{i=1}^r\left\vert\sum_{t\in S_i} x(t)\right\vert\leq \max\{1-\varepsilon,4\varepsilon\}<1.$$
Hence $\Vert x\Vert=1$. Moreover, if we take $\{S_1,\ldots,S_r\}$
a family of admissible segment, $\lambda_1,\ldots, \lambda_r\in
\{-1,1\}$ such that

$$\sum_{i=1}^r \lambda_i f_{S_i}(x)>1-\alpha$$
for $0<\alpha<\min\{1-4\varepsilon,\varepsilon\}<1$ then $r=1$,
$S_1=\{(0,0),(1,0)\}$ and $\lambda_1=1$. So

$$S(A,x,\alpha)=\left\{f_{\{(0,0),(1,0)\}}\right\}\Rightarrow \inf\limits_{\alpha>0}diam(S(A,x,\alpha))=0.$$
Now Theorem \ref{teofundajulang} applies and as a consequence we
get that

$$\inf\limits_{\alpha>0} diam(S(B_{JH^*},x,\alpha))=0,$$
so we are done.\end{proof}

Let's remark that $x$ of the above theorem is a Fr\'echet
differentiability point of $B_{JH}$, see \cite{dgz}, so as a
consequence of the above result we deduce that the unit ball $JH^*$ has denting points.

\section{The space $JH_\infty$.}

\par
\bigskip

We shall begin with the construction of $JH_\infty$ space from the $JH$ space, by a similar process to the construction of $JT_\infty$ space from $JT$ space.

Consider $T$ as in section 2. A segment $S=\{t_1,\ldots, t_k\}$ is a $n-m$ segment, for $n\leq m$, if $\vert t_1\vert=n$ and $\vert t_k\vert=m$.

If $\{S_1,\ldots, S_k\}$ is a finite family of segments in $T$ we say that is admissible if

\begin{enumerate}
\item Exist natural numbers $n,m$ satisfying $n\leq m$ and $S_i$ is a $n-m$ segment for every $i\in\{1,\ldots, k\}$.
\item $S_i\cap S_j=\emptyset$ if $i\neq j$.
\end{enumerate}

Given $x:T\longrightarrow \mathbb R$ a finitely nonzero function we define

$$\Vert x\Vert:=\sup\sum_{i=1}^k \left\vert \sum_{t\in S_i}x(t)\right\vert,$$

where the sup is taken over all families of admissible segments $\{S_1,\ldots,S_k\}$ in $T$.

We define the space $JH_\infty$ as the completion of the space of finitely nonzero functions on $T$ in the above norm.

If $S\subseteq T$ is a segment then we denote by

$$f_S(x):=\sum_{t\in S} x(t).$$

Note that $f_S\in S_{(JH_\infty)^*}$.

Moreover, in view of the definition of the norm we have that given   a family of admissible segments $\{S_1,\ldots, S_k\}$ then $\sum_{i=1}^k \lambda_i f_{S_i}\in B_{(JH_\infty)^*}$, whenever $\lambda_1,\ldots, \lambda_k\in \{-1,1\}$.

Given $\alpha\in T$ define

$$e_\alpha(\beta):=\left\{\begin{array}{cc}
1 & \mbox{if } \beta=\alpha\\
0 & \mbox{otherwise}
\end{array} \right.$$

Then $\{e_\alpha\}_{\alpha\in T}$ defines a Schauder basis on $JH_\infty$.

Let us remark that $JH_{\infty}$ is not isomorphic to $JH$. Indeed, we know that $JH$ does not contain isomorphic copies of $\ell_1$, however it is enough consider the sequence $\{e_{\alpha_n}\}$, where 
$\{\alpha_n\}$ is an infinite sequence of immediate successors of the first node  in $T$, to get an isometric copy of the usual basis in $\ell_1$. Furthermore it is clear that $JH_{\infty}$ contains isometric copies of $JH$. Now, we can get, as in the previous section for $JH$, the following result.

\begin{theorem} $JH_{\infty}$ has the strong diameter two property (and so the norm of $JH_{\infty}^*$ is octahedral).
\end{theorem}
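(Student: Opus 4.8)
The plan is to follow verbatim the scheme used for $JH$ in Theorem \ref{sliced2pjh}, the point being that the norm of $JH_\infty$ has exactly the same formal description as that of $JH$ — a supremum over finite families of pairwise disjoint segments sharing a common pair of end-levels — the only change being that the underlying tree $T$ is now infinitely branching instead of dyadic. Accordingly, the first step is to establish the exact analogue of the ``bumping'' Lemma \ref{lemanormaelenuevo} in $JH_\infty$: if $\Vert x\Vert\leq 1-\frac{1}{n}$, if $a\in T$ has level strictly larger than every level occurring in $supp(x)$, and if $t_1,\ldots,t_n$ are pairwise distinct nodes with $a<t_i$ lying at a common level $\ell>|a|$, then the function obtained from $x$ by placing values $\mu_i\frac{1}{n}$ (with $\mu_i\in\{-1,1\}$) at the $t_i$ and leaving $x$ unchanged on $supp(x)$ still has norm at most one. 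I expect the proof to be identical to that of Lemma \ref{lemanormaelenuevo}, splitting an arbitrary signed admissible functional $x^*=\sum_j\lambda_j f_{S_j}$ into the three cases according to whether $\bigcup_j S_j$ meets $\{t_1,\ldots,t_n\}$ and/or $supp(x)$. Here producing the nodes $t_i$ is even easier than in the dyadic setting: since every node of $T$ has infinitely many immediate successors, one may simply take $\ell=|a|+1$ and choose $n$ distinct children of $a$.

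The one step deserving attention is the third case of that lemma, where $\bigcup_j S_j$ meets both $\{t_1,\ldots,t_n\}$ and $supp(x)$, and I would concentrate on checking that it rests only on two features of the admissibility structure, both of which persist in $JH_\infty$: first, all segments of an admissible family share a common starting level $p$; second, in a tree each node has a unique ancestor at every lower level. Since some segment meets $supp(x)$, whose levels are all strictly below $|a|$, the common starting level satisfies $p<|a|$; hence any segment $S_j$ reaching a node $t_i$ (at level $\ell>|a|$) must contain the unique ancestor of $t_i$ at level $|a|$, namely $a$ itself. By disjointness exactly one segment, say $S_1$, contains $a$. Splitting $S_1=T_1\cup R_1$ at level $\ell$ and observing that the remaining tails meet neither $supp(x)$ nor $\{t_1,\ldots,t_n\}$, one bounds $x^*(y)\leq\Vert x\Vert+\frac{1}{n}\leq 1$ exactly as before.

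With this lemma in hand, the second step reproduces the proof of Theorem \ref{sliced2pjh} line by line. Given a convex combination of slices $C=\sum_{i=1}^n\lambda_i S(B_{JH_\infty},x_i^*,\alpha)$, I would choose $x_i$ in the $i$-th slice with $\Vert x_i\Vert<1$ and $x_i^*(x_i)>1-\alpha$, fix $m$ with $\Vert x_i\Vert\leq 1-\frac{1}{m}$ for all $i$, and select a common node $a$ deeper than every support together with $n\cdot 2m$ pairwise distinct nodes $t_p^i$ at a single common level $k>|a|$ with $a<t_p^i$ (available because $T$ is infinitely branching). Defining $y_i$ and $z_i$ by bumping $x_i$ with heights $sign(x_i^*(e_{t_p^i}))\frac{1}{m}$ on $\{t_1^i,\ldots,t_m^i\}$ and on $\{t_{m+1}^i,\ldots,t_{2m}^i\}$ respectively, the bumping lemma gives $\Vert y_i\Vert,\Vert z_i\Vert\leq 1$, while the sign choice forces $x_i^*(y_i),x_i^*(z_i)\geq x_i^*(x_i)>1-\alpha$, so that $\sum_i\lambda_i y_i,\sum_i\lambda_i z_i\in C$. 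Finally, the family of singletons $\{\{t_p^i\}\}$ is admissible (each is a $k$-$k$ segment and they are pairwise disjoint), so the associated signed functional $f$ has norm at most one and evaluates the difference $\sum_i\lambda_i y_i-\sum_i\lambda_i z_i$ to $2$, giving $diam(C)=2$. The octahedrality of $JH_\infty^*$ then follows at once from the duality recorded in the introduction (see \cite{blr}).

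I expect no genuine obstacle here: the entire difficulty is concentrated in verifying that the combinatorial facts invoked in Lemma \ref{lemanormaelenuevo} — that an admissible family has a common starting level, and that ancestors in a tree are unique at each level — never used the dyadic nature of the tree. Once this is observed, the passage from $JH$ to $JH_\infty$ is purely formal, and the infinite branching only helps, since it eliminates the need to descend to a deep level in order to find enough incomparable nodes below $a$.
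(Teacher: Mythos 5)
Your proposal is correct and is exactly the route the paper intends: the paper states this theorem without proof, remarking only that it follows ``as in the previous section for $JH$'', and you supply precisely that adaptation, correctly isolating the two combinatorial facts (common starting level of admissible families, uniqueness of ancestors at each level) that make Lemma \ref{lemanormaelenuevo} and the slice argument of Theorem \ref{sliced2pjh} carry over to the infinitely branching tree.
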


In order to study diameter two properties in $JH_{\infty}^*$,  next Lemma will help us  to estimate the diameter of certain  $w^*$-slices in $B_{JH_\infty^*}$.

\begin{proposition}\label{estimajhinf}
Let $R,S$ be two disjoint segments in $T$ such that are $p-q$ and $p-r$ segments for suitable $p,q,r\in\mathbb N, p\leq q\leq r$. Then

$$\Vert f_R-f_S\Vert\leq \frac{5}{3}.$$

\end{proposition}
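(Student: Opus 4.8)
The plan is to bound $\Vert f_R-f_S\Vert$ by writing $f_R-f_S$ as an explicit convex-type combination of functionals that are each manifestly in $B_{JH_\infty^*}$, exploiting the fact recorded just before the statement that $\sum_{i}\lambda_i f_{S_i}\in B_{JH_\infty^*}$ whenever $\{S_i\}$ is admissible and $\lambda_i\in\{-1,1\}$. First I would dispose of the trivial case $q=r$: then $R$ and $S$ are disjoint $p$-$r$ segments, so $\{R,S\}$ is admissible and $f_R-f_S\in B_{JH_\infty^*}$, giving $\Vert f_R-f_S\Vert\le 1$. So assume $q<r$.

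The key geometric observation is that, since $R$ and $S$ are disjoint and both begin at level $p$, they begin at distinct nodes of level $p$ and hence lie in disjoint subtrees; in particular any downward prolongation of $R$ stays disjoint from $S$. Let $t$ be the node of $R$ at level $q$ and let $c_1,c_2$ be its two offspring (at level $q+1$). I would choose downward paths $P$ through $c_1$ and $P'$ through $c_2$, each running from level $q+1$ down to level $r$. Then $P,P'$ are disjoint $(q+1)$-$r$ segments, while $R\cup P$ and $R\cup P'$ are $p$-$r$ segments, each disjoint from $S$.

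The decomposition I would use is
$$f_R-f_S=\tfrac12\,(f_{R\cup P}-f_S)+\tfrac12\,(f_{R\cup P'}-f_S)-\tfrac12\,(f_P+f_{P'}),$$
which is checked by expanding $f_{R\cup P}=f_R+f_P$ and $f_{R\cup P'}=f_R+f_{P'}$: the half-copies of $f_P$ and $f_{P'}$ produced by the first two terms are cancelled by the last. Now each grouped functional lies in $B_{JH_\infty^*}$: the pairs $\{R\cup P,S\}$ and $\{R\cup P',S\}$ are admissible (disjoint $p$-$r$ segments), so $f_{R\cup P}-f_S$ and $f_{R\cup P'}-f_S$ lie in $B_{JH_\infty^*}$; and $\{P,P'\}$ is admissible (disjoint $(q+1)$-$r$ segments), so $f_P+f_{P'}$ lies in $B_{JH_\infty^*}$. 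The triangle inequality then yields $\Vert f_R-f_S\Vert\le \tfrac12+\tfrac12+\tfrac12=\tfrac32\le\tfrac53$.

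The only real obstacle is discovering this symmetrization. The naive identity $f_R-f_S=(f_{R\cup P}-f_S)-f_P$ gives only $\Vert f_R-f_S\Vert\le 1+1=2$, since the single spurious term $f_P$ is charged at full norm. The gain comes from prolonging $R$ along \emph{both} offspring of its endpoint and averaging, so that the two spurious tails $P,P'$ recombine into the single admissible pair $\{P,P'\}$ of norm $\le 1$, effectively costing only $\tfrac12$. (Note this argument in fact proves the sharper bound $\tfrac32$, from which the stated $\tfrac53$ follows at once.) The remaining points are routine and I would only verify them in passing: that $R\cup P$ is genuinely a segment disjoint from $S$, which is exactly the disjoint-subtree observation above, and that $P,P'$ can be prolonged all the way to level $r$, which holds because every node of $T$ has two offspring.
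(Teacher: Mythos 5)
Your proof is correct, and it takes a genuinely different route from the paper's. The paper works with a near-extremizing element: writing the longer of the two segments as $U\cup V$ with $U$ a $p$-$q$ segment and $V$ a $(q+1)$-$r$ segment, it picks $x$ with $\Vert x\Vert\le 1$ and $(f_R-f_S)(x)$ close to $\Vert f_R-f_S\Vert=2-\alpha$, extracts $\vert f_U(x)\vert\le\alpha+\varepsilon$ and $f_V(x)>1-\alpha-\varepsilon$ from the admissibility of $\{U,S\}$, and then tests $x$ against the admissible pair $\{R,S\cup W\}$, where $W$ prolongs the short segment to level $r$ avoiding $supp(x)$, to force $\alpha\ge 1/3$. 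You instead give an explicit, $x$-independent decomposition of $f_R-f_S$ into three functionals, each in $B_{JH_\infty^*}$ by the admissibility remark recorded just before the Proposition, and finish with the triangle inequality; the symmetrization over the two prolongations $P,P'$ is the genuine idea, and it buys the sharper constant $3/2$ in place of $5/3$ (either suffices for Theorem \ref{nowsliced2pjhinf}, which only needs a bound strictly below $2$). Two minor remarks. In the $JH_\infty$ tree each node has infinitely many offspring rather than two, but your argument only needs two of them, so nothing breaks. Also, neither constant is optimal: since every node has infinitely many offspring and the relevant $x$ have finite support, one may choose a single prolongation $P$ of the short segment with $P\cap supp(x)=\emptyset$, whence $\vert f_R(x)\vert+\vert f_S(x)\vert=\vert f_{R\cup P}(x)\vert+\vert f_S(x)\vert\le\Vert x\Vert$ and $\Vert f_R-f_S\Vert\le 1$; this refinement is already latent in the paper's inequality $\Vert x\Vert\ge\vert f_R(x)\vert+\vert f_{S\cup W}(x)\vert$. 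Your fixed decomposition trades that sharpness for a cleaner, support-free argument, which is a reasonable choice here.
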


\begin{proof}
If $r=q$ then $\{S,R\}$ is a family of admissible segments in $T$. Hence

$$\Vert f_R-f_S\Vert=1<\frac{5}{3}.$$

Now assume that $q<r$. Then we can find $U$ a $p-q$ segment and $V$ an $(q+1)-r$ segment such that

\begin{equation}\label{descosegran}
U\cup V=R\Rightarrow f_R=f_U+f_V.
\end{equation}

Let $\alpha\in\mathbb R^+_0$ such that $\Vert f_R-f_S\Vert=2-\alpha$ and $\varepsilon\in\mathbb R^+$. Then there exists a finitely nonzero function $x:T\longrightarrow \mathbb R$, $\Vert x\Vert\leq 1$, such that

$$(f_R-f_S)(x)>2-\alpha-\varepsilon\Rightarrow f_R(x)>1-\alpha-\varepsilon\ \mbox{ and } f_S(x)<-1+\alpha+\varepsilon.$$

As $U$ is a $p-q$ segment disjoint with $S$ we have that $\{U,S\}$ is a family of admissible segments. As a consequence $\Vert f_U-f_S\Vert\leq 1$. Hence

$$2-\alpha-\varepsilon<f_R(x)-f_S(x)=(f_U-f_S)(x)+f_V(x)\leq 1+f_V(x)$$ and so  $$f_V(x)>1-\alpha-\varepsilon.$$

Moreover

$$1\geq f_R(x)=f_U(x)+f_V(x)\geq 1-\alpha-\varepsilon+f_U(x),$$

hence

\begin{equation}\label{estimasegaltoar}
f_U(x)\leq \alpha+\varepsilon.
\end{equation}

Now, again using that $\{S,U\}$ is a family of admissible segments, we have that $\Vert f_U+f_S\Vert\leq 1$. Hence

$$-1\leq (f_U+f_S)(x)< f_U(x)+(-1+\alpha+\varepsilon).$$

Then

\begin{equation}\label{estimasegaltoaba}
f_U(x)>-\alpha-\varepsilon.
\end{equation}

Combining both (\ref{estimasegaltoar}) and (\ref{estimasegaltoaba}) it follows

\begin{equation}\label{estimamodulo}
\vert f_U(x)\vert\leq \alpha+\varepsilon.
\end{equation}

Now, as $x$ has finite support, we can find $W$ a $(q+1)-r$ segment such that $S\cup W$ is a $p-r$ segment disjoint with $R$ and we can assume that $x(t)=0\ \forall t\in W$. From here, we deduce that $\{R,S\cup W\}$ is a family of admissible segments in $T$. Hence

$$1\geq \Vert x\Vert\geq \vert f_R(x)\vert+\vert f_{S\cup W}(x)\vert=\vert f_U(x)+f_V(x)\vert+\vert f_S(x)\vert\geq$$

$$\geq \vert f_V(x)\vert-\vert f_U(x)\vert+\vert f_S(x)\vert\geq (1-\alpha-\varepsilon)-(\alpha+
\varepsilon)+(1-\alpha-\varepsilon)=
2-3\alpha-3\varepsilon.$$

From the arbitrariness of $\varepsilon$ we deduce that

$$1\geq 2-3\alpha\Rightarrow \alpha\geq \frac{1}{3}.$$

Then $\Vert f_S-f_T\Vert=2-\alpha\leq 2-\frac{1}{3}=\frac{5}{3}$, as wanted.

\end{proof}

Now we can conclude that there are $w^*$-slices in $B_{JH_\infty^*}$ with diameter strictly less than two. In fact, we can find $w^*$-slices with diameter less than $\frac{5}{3}+\varepsilon$ for every $0<\varepsilon<\frac{1}{3}$.

\begin{theorem}\label{nowsliced2pjhinf}\
There exists $x\in S_{JH_\infty}$ such that

$$\inf\limits_{\alpha>0} S(B_{JH_\infty^*},x,\alpha)\leq\frac{5}{3}.$$

\end{theorem}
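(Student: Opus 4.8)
The plan is to mimic the strategy used for $JT_\infty$: choose a norm-one vector $x$ concentrated near the root of $T$ so that the slice of a convenient norming set collapses to single-segment functionals, and then read the diameter bound directly from Proposition \ref{estimajhinf}. Let $A$ denote the set of functionals $\sum_{i=1}^k\lambda_i f_{S_i}$ with $\{S_1,\dots,S_k\}$ admissible and $\lambda_i\in\{-1,1\}$; as observed just after the definition of the norm in $JH_\infty$, $A\subseteq B_{JH_\infty^*}$ is norming, so a separation argument yields $B_{JH_\infty^*}=\overline{co}^{w^*}(A)$, and by Theorem \ref{teofundajulang} it suffices to bound $\inf_{\alpha>0} diam(S(A,x,\alpha))$. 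Concretely I would fix $0<\varepsilon<\tfrac12$, take $0<\alpha<\varepsilon$, and set
$$x:=(1-\varepsilon)e_{(0,0)}+\varepsilon e_{(1,0)}.$$
A short check with admissible families shows $\Vert x\Vert=1$, the norm being attained precisely by the single segment $\{(0,0),(1,0)\}$, so $x\in S_{JH_\infty}$.

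The crucial structural step is to identify $S(A,x,\alpha)$. Since every admissible family shares a common starting level $p$ and consists of pairwise disjoint segments, and since $(0,0)$ is the unique node at level $0$, any admissible family possessing a segment through the root must reduce to that single segment. Hence for $0<\alpha<\varepsilon$ the only elements $\sum_i\lambda_i f_{S_i}$ of $A$ with $\sum_i\lambda_i f_{S_i}(x)>1-\alpha$ are the single functionals $f_S$ for which $\{(0,0),(1,0)\}\subseteq S$: a family whose segments all start at level $\geq 1$ evaluates on $x$ to at most $\varepsilon<1-\alpha$, while a single segment through the root missing $(1,0)$ contributes at most $1-\varepsilon<1-\alpha$, and the sign $\lambda_1=-1$ is clearly excluded.

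Given two such slice elements $f_R$ and $f_S$, both $R$ and $S$ start at the root and pass through $(1,0)$; let $P$ be their maximal common initial part, ending at some level $\ell$, and write $R=P\cup R'$, $S=P\cup S'$. Then $f_R-f_S=f_{R'}-f_{S'}$, where $R'$ and $S'$ are disjoint segments, both beginning at level $\ell+1$. If one of the tails is empty the difference has norm at most $1$; otherwise $R'$ and $S'$ are disjoint segments with the same starting level, hence $p$-$q$ and $p$-$r$ segments in the sense of Proposition \ref{estimajhinf}, which gives $\Vert f_R-f_S\Vert=\Vert f_{R'}-f_{S'}\Vert\leq \tfrac53$. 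Therefore $diam(S(A,x,\alpha))\leq\tfrac53$, and Theorem \ref{teofundajulang} delivers the claimed estimate $\inf_{\alpha>0} diam(S(B_{JH_\infty^*},x,\alpha))\leq\tfrac53$.

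The main obstacle is the structural lemma describing the slice: one must combine admissibility (equal starting levels together with disjointness) with the uniqueness of the root to force every slice functional into a single segment through a prescribed pair of nodes, and then recognize that the difference of two such functionals is exactly the difference of two disjoint segments sharing a common starting level — precisely the configuration controlled by Proposition \ref{estimajhinf}. Once this reduction is in place, the remaining computations (the norm of $x$ and the evaluation estimates on the excluded families) are routine.
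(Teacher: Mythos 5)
Your proposal is correct and follows essentially the same route as the paper's proof: the same norming set $A$ with $B_{JH_\infty^*}=\overline{co}^{w^*}(A)$ and Theorem \ref{teofundajulang}, the same choice of $x$ supported on the root plus one offspring, the same identification of $S(A,x,\alpha)$ with single segments through both nodes, and the same reduction of $f_R-f_S$ to the difference of the two disjoint tails controlled by Proposition \ref{estimajhinf}. The only slip is notational: the tree of Section 4 is the infinitely branching tree of Section 2, so the point should be written as $(1-\varepsilon)e_{\emptyset}+\varepsilon e_{(1)}$ rather than with the dyadic labels $(0,0)$ and $(1,0)$; this does not affect the argument.
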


\begin{proof}
Define

$$A:=\left\{ \sum_{i=1}^n\lambda_i f_{S_i}\ \left/\begin{array}{c}
\vert \lambda_i\vert= 1 \ i\in\{1,\ldots, n\}\\
 \{S_1,\ldots, S_n\}\mbox{ family of admissible segments}\end{array} \right.\right\}.$$

It is clear that $\overline{co}^{w^*}(A)=B_{JH_\infty^*}$ by an easy separation argument. Let $0<\delta<1$.

Define $x:=(1-\delta)e_{\emptyset}+\delta e_{(1)}$. Pick $0<\alpha<\delta< 1/2$. Then if $\sum_{i=1}^n \lambda_i f_{S_i}\in S(A,x,\alpha)$ we have that $n=1$, $S_1$ is a $0-p$ segment for suitable $p\geq 1$, $\emptyset,(0)\in S_1$ and $\lambda_1=1$.

So, in order to estimate $diam(S(A,x,\alpha))$ pick $f_S,f_R\in S(A,x,\alpha)$. Notice that $S\cap R\neq \emptyset$ (both segments contain the set $\{\emptyset,(1)\}$). However we can find $U,V$ two disjoint segments which are $p-q$ and $p-r$ segments, for suitable $p,q,r\geq 2$ such that

$$S=(S\cap R)\cup U\ \mbox{ and }\ R=(S\cap R)\cup V.$$

Then

$$f_R-f_S=f_{S\cap R}+f_V-f_{S\cap R}-f_U=f_V-f_U.$$

By Proposition \ref{estimajhinf} we deduce that

$$\Vert f_V-f_U\Vert\leq \frac{5}{3}\Rightarrow \Vert f_R-f_S\Vert\leq \frac{5}{3}.$$

From the arbitrariness of $f_R,f_S\in S(A,x,\alpha)$ we deduce that

$$diam(S(A,x,\alpha))\leq \frac{5}{3}.$$

Hence

$$\inf\limits_{\alpha>0}diam(S(A,x,\alpha))\leq \frac{5}{3}.$$

Now theorem \ref{teofundajulang} applies and

$$\inf\limits_{\alpha>0} diam(S(B_{JH_\infty^*},x,\alpha))\leq \frac{5}{3}$$

so we are done.

\end{proof}

In particular, the above theorem shows that $JH_\infty^*$ fails to have the $w^*$-slice diameter two property, and so every diameter two property.

In view of the element $x$ in last theorem, it seems that the fact that $\emptyset\in supp(x)$ is a very important fact (it allowed us to describe easily the elements of $S(A,x,\alpha)$). This fact will become clear in next section.

\section{An hyperplane of $JH_{\infty}^*$ satisfying the $w^*$-strong diameter two property.}

We will consider $T$ defined as in the previous section. Let

$$N:=\left\{x:T\longrightarrow
\mathbb R\ \left/\begin{array}{c}
x\mbox{ is a finitely nonzero function }\\
x(\emptyset)=0
\end{array} \right. \right\}.$$

Now consider over $N$ the norm defined in the previous section. In other words

$$\Vert x\Vert:=\sup\sum_{i=1}^k \left\vert \sum_{t\in S_i}x(t)\right\vert,$$

where the sup is taken over all families of admissible segments $\{S_1,\ldots, S_k\}$ in $T$.

Now define $M$ as the completion of $N$ under the above norm.

Note that $i:N\hookrightarrow JH_\infty$ is a linear isometry. So, it can be uniquely extended to a linear isometry $\Phi:M\longrightarrow JH_\infty$ and, as a consequence, $M$ can be view as a closed subspace of $JH_\infty$.

\begin{remark} Given $x\in N$, notice that in the definition of the norm we can consider only families of admissible segments which are $p-q$ segments with $p\geq 1$. This is an important fact which will allow us to conclude the $w^*$-strong  diameter two property for $M^*$\end{remark}

For $S\subseteq T$ a segment define $f_S\in M^*$ by

$$f_S(x)=\sum_{t\in S} x(t)\ \ \forall x\in N.$$

The first consequence of the previous Remark is that

$$A:=\left\{ \sum_{i=1}^n\lambda_i f_{S_i}\ \left/\begin{array}{c}
\vert \lambda_i\vert= 1 \ \forall i\in\{1,\ldots, n\}\\
 \{S_1,\ldots, S_n\}\mbox{ family of admissible segments}\\
 \emptyset\notin S_i\ \forall i\in\{1,\ldots,n\}\end{array} \right.\right\}$$

is a norming set in $B_{M^*}$. Hence

\begin{equation}\label{normantejhinfi}
B_{M^*}=\overline{co}^{w^*}
\left(A\right),
\end{equation}

is an immediate consequence of Hahn-Banach's theorem.

We will use that fact in order to prove that $M^*$ enjoys the $w^*$-strong diameter two property.

\begin{theorem}

$M^*$ has the $w^*$-strong diameter two property.

\end{theorem}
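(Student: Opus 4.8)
The plan is to bypass a direct dual construction and instead use the equivalence recalled in the introduction: the norm of a Banach space $X$ is octahedral if and only if $X^*$ has the $w^*$-strong diameter two property. So it suffices to prove that the norm of $M$ is octahedral, which is a statement about $M$ itself and can be attacked by the same kind of ``deep tail'' construction already used for $JH$.

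Here is how I would set it up. Fix $\varepsilon>0$ and a finite-dimensional $Y\subseteq M$. Since finitely supported functions are dense in $M$, I may assume $Y$ is spanned by finitely supported functions vanishing at $\emptyset$ and supported in the levels $\le L$; let $F$ be the finite union of their supports. The candidate vector is $u:=\frac1h\sum_{t\in P}e_t$, where $P$ is a single $\ell$-$(\ell+h-1)$ segment with $\ell>L$ lying entirely in the subtree hanging from some level-one node $(j_0)$ with $(j_0)\notin F$; such a node exists because $T$ branches infinitely. Then $\|u\|=1$: the family $\{P\}$ gives $f_P(u)=1$, and since $u\ge 0$ is carried by a single path, disjointness forces $\sum_j\lvert f_{T_j}(u)\rvert\le\sum_{t\in P}u(t)=1$ for every admissible family, so $u\in S_M$.

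Now fix $y\in Y$ and a scalar $\lambda$. By the Remark I may choose an admissible family $\{S_1,\dots,S_r\}$ of $p$-$q$ segments with $p\ge 1$ and $\sum_i\lvert f_{S_i}(y)\rvert>(1-\varepsilon)\|y\|$; as $y$ is supported in $F$, these $S_i$ meet the tree only inside $F$. First I would extend each $S_i$ downwards to a common deep level $D>\ell+h-1$, routing the extensions so as to avoid the subtree of $(j_0)$, which is possible by infinite branching and does not change $f_{S_i}(y)$. Then I adjoin one further $p$-$D$ segment $R$ that runs down the ancestor path of $P$ inside the $(j_0)$-subtree and through all of $P$; since $(j_0)\notin F$, $R$ avoids $F$, so $f_R(y)=0$ while $f_R(\lambda u)=\lambda f_P(u)=\lambda$. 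The enlarged family is admissible (all members are now $p$-$D$ segments, and they are pairwise disjoint because the extended $S_i$ stay inside $F$ whereas $R$ stays in the $(j_0)$-subtree), so evaluating at $\lambda u+y$ yields $\|\lambda u+y\|\ge\sum_i\lvert f_{S_i}(y)\rvert+\lvert f_R(\lambda u)\rvert>(1-\varepsilon)\|y\|+\lvert\lambda\rvert\ge(1-\varepsilon)(\lvert\lambda\rvert+\|y\|)$. This is exactly octahedrality, and the equivalence above then gives the $w^*$-strong diameter two property for $M^*$.

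The hard part, and the only place where the hypotheses are really used, is the joint admissibility in the last step: I need a single fresh segment reading $u$ in full while sharing the level range of, and being disjoint from, a near-optimal family for $y$. Infinite branching of $T$ supplies, at every level $\ge 1$, nodes unused by $F$, which is what makes both the disjoint re-routing of the $S_i$ and the extra segment $R$ available; and the deletion of $\emptyset$ (formalised in the Remark) is what lets me take the family for $y$ with $p\ge 1$, so that $R$ may start at a level carrying infinitely many nodes instead of at the single root. This is precisely the mechanism that fails in $JH_\infty$ for a direction concentrated at $\emptyset$, such as the one in Theorem \ref{nowsliced2pjhinf}: there every norming family is forced through the unique node $\emptyset$, leaving no room for an independent reading of $u$, which is why $JH_\infty^*$ fails the property whereas $M^*$ enjoys it.
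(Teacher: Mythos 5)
Your proposal is correct in substance, but it reaches the theorem by a genuinely different route than the paper. The paper argues entirely inside $M^*$: it takes a convex combination of $w^*$-slices, represents a point of each slice by a molecule $\sum_j\mu_j^i f_{S_j^i}$, adjoins $\pm f_{S_i}$ for fresh segments $S_i$ living on a branch that misses $\bigcup_i \mathrm{supp}(x_i)$ (so that adding $\pm f_{S_i}$ does not move the functional out of its slice), and then bounds the diameter below by $2\Vert\sum_i\lambda_i f_{S_i}\Vert=2$ by evaluating at a single basis vector $e_\alpha$ with $\alpha$ in the common part of the $S_i$. You instead prove the primal statement that the norm of $M$ is octahedral and invoke the duality from \cite{blr} recalled in the introduction; your external ingredient is that equivalence, while the paper's proof is self-contained at the level of slices. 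The combinatorial engine is the same in both arguments --- a deep segment in a part of the tree untouched by the finitely many relevant supports, made possible because (by the Remark) norming families for elements of $N$ may be taken with $p\geq 1$, where every level has infinitely many nodes --- and your closing discussion of why this mechanism dies at $\emptyset$ in $JH_\infty$ is exactly the right diagnosis. What your version buys is a statement about $M$ itself (octahedrality) obtained directly, from which the $w^*$-SD2P of $M^*$ and the almost Daugavet property both follow; what the paper's version buys is independence from the duality theorem.

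Two points in your write-up need tightening, though neither is a real obstruction. First, you need the \emph{whole subtree} of $(j_0)$ to miss $F$, not merely $(j_0)\notin F$: otherwise the forced initial part of $R$ (the ancestor path of $P$ between levels $p$ and $\ell-1$) could hit $\mathrm{supp}(y)$ and spoil $f_R(y)=0$. Since $F$ is finite and level one is infinite, such a $(j_0)$ exists, and then every $p$--$q$ segment with $p\geq1$ either lies entirely inside or entirely outside that subtree, so the segments contributing to the norm of $y$ can be assumed outside it. Second, the $S_i$ need not ``meet the tree only inside $F$'', and their downward extensions to level $D$ must be routed so as to avoid $\mathrm{supp}(y)$ at the intermediate levels $\leq L$ (not only the subtree of $(j_0)$), or else $f_{S_i^{\mathrm{ext}}}(y)$ may differ from $f_{S_i}(y)$; again infinite branching supplies offspring outside the finite set $F$ at every step, and descendants of distinct nodes at a common level are automatically distinct, so disjointness of the extended family is free. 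With these adjustments the argument is complete.
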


\begin{proof}
Let $C:=\sum_{i=1}^n \lambda_i S(B_{M^*},x_i,\varepsilon)$ a convex combination of $w^*$-slices in $B_{M^*}$, being $x_1,\ldots, x_n$ finitely non-zero functions defined on $T$. Let prove that $diam(C)=2$.

To this aim, for each $i\in\{1,\ldots, n\}$ we can find $n_i\in\mathbb N, \{S_1^i,\ldots, S_{n_i}^i\}$ a family of admissible segments in $T$ and $\mu_1^i,\ldots,\mu_{n_i}^i\in \{-1,1\}$ such that

$$\sum_{i=1}^n \lambda_i \sum_{j=1}^{n_i}\mu_j^i f_{S_j^i}\in C.$$

Now for every $i\in\{1,\ldots, n\}$ we have that $S_j^i$ is a $p_i-q_i$ segment for each $j\in\{1,\ldots, n_i\}$. We can assume that $q_1=q_2=\ldots=q_n=r$ and that $r>\max\limits_{1\leq i\leq n} p_i$  because $x_1,\ldots, x_n$ have finite support and each element on $T$ has infinitely many offspring.

Again due to the finiteness of $sup(x_i)$ for each $i\in\{1,\ldots, n\}$ we can find $B$ a branch in $T$ such that

$$B\bigcap\left(\bigcup\limits_{i=1}^n supp(x_i)\right)=\emptyset.$$

For each $i\in\{1,\ldots, n\}$ we can choose $S_i\subseteq B$ a $p_i-r$ segment in $T$. As $S_i\cap supp(x_i)=\emptyset$ and $\{S_1^i,\ldots, S_{n_i}^i, S_i\}$ is a family of admissible segments in $T$ we deduce that

$$\sum_{i=1}^n \lambda_i \left( \sum_{j=1}^{n_i}\mu_j^i f_{S_j^i}\pm f_{S_i}\right)\in C.$$

Hence

$$diam(C)\geq \left\Vert\sum_{i=1}^n \lambda_i \left( \sum_{j=1}^{n_i}\mu_j^i f_{S_j^i}+ f_{S_i}\right)-\sum_{i=1}^n \lambda_i \left( \sum_{j=1}^{n_i}\mu_j^i f_{S_j^i}- f_{S_i}\right)\right\Vert=$$

$$=2\left\Vert \sum_{i=1}^n \lambda_i f_{S_i}\right\Vert.$$

Let's prove that $\left\Vert \sum_{i=1}^n \lambda_i f_{S_i}\right\Vert=1$. Remark's that $\left\Vert \sum_{i=1}^n \lambda_i f_{S_i}\right\Vert\leq 1$ is obvious in view of triangle inequality. Moreover, as $S_i$ is a $p_i-r$ segment in $T$ and $p_i<r\ \forall i\in\{1,\ldots, n\}$ we deduce the existence of $\alpha\in \bigcap\limits_{i=1}^n S_i$. Now $e_\alpha\in S_M$. Hence

$$\left\Vert \sum_{i=1}^n \lambda_i f_{S_i}\right\Vert\geq \sum_{i=1}^n \lambda_i f_{S_i}(e_\alpha)=\sum_{i=1}^n \lambda_i=1.$$

Thus $diam(C)=2$ as desired.

\end{proof}

Last theorem shows that $M^*$ has each $w^*$ diameter two property and so the norm of $M$ is octahedral. Also, it is easy to see that $M$ has the strong diameter two property, as proved for $JH$, and so the norm of $M^*$ is octahedral. As $M$ is separable, we deduce the following

\begin{corollary} $M$ has the almost Daugavet property.\end{corollary}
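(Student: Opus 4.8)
The final statement is the Corollary asserting that $M$ has the almost Daugavet property. The plan is to quote the equivalence, stated explicitly in the introduction and attributed to \cite{Vladimir}, that for a separable Banach space the almost Daugavet property is equivalent to having an octahedral norm. Thus the entire burden reduces to two facts that the paper has already established in the theorem immediately preceding: first, that $M$ is separable, and second, that the norm of $M$ is octahedral. Separability is immediate because $M$ was defined as the completion of the space $N$ of finitely nonzero functions on $T$ vanishing at $\emptyset$, and this space has a countable dense subset (rational finitely supported functions), or equivalently because $M$ embeds isometrically into the separable space $JH_\infty$.

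The octahedrality of the norm of $M$ comes for free from the dual relationship recalled in the introduction: the norm of $M$ is octahedral if, and only if, $M^*$ satisfies the $w^*$-strong diameter two property. The last displayed theorem of Section 5 proves exactly that $M^*$ has the $w^*$-strong diameter two property, so octahedrality of $\|\cdot\|_M$ follows at once. Combining octahedrality with separability and invoking the \cite{Vladimir} equivalence yields the almost Daugavet property for $M$.

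The argument is therefore a short chain of citations rather than a fresh computation, and there is no serious obstacle: the only point requiring care is to make sure we are applying the \cite{Vladimir} equivalence in the correct direction, namely that octahedral norm $+$ separability $\Rightarrow$ almost Daugavet, and that this matches the precise statement quoted in the introduction (where it is noted that these are further equivalent to $M^*$ having the $w^*$-strong diameter two property). Since the preceding theorem supplies precisely the $w^*$-strong diameter two property of $M^*$, one could alternatively bypass octahedrality entirely and read off the almost Daugavet property directly from that equivalence. I would write the proof in the octahedral formulation for clarity, but note the direct route as a remark.
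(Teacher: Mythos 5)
Your proposal is correct and follows the same route as the paper: the theorem preceding the corollary gives the $w^*$-strong diameter two property of $M^*$, hence the octahedrality of the norm of $M$, and combined with separability of $M$ the equivalence from \cite{Vladimir} yields the almost Daugavet property. Your remark that one can read the conclusion directly off the $w^*$-strong diameter two property of $M^*$ is also consistent with the chain of equivalences stated in the introduction.
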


\end{document}